\newtheorem{theorem}{Theorem}[section]
\newtheorem{lemma}{Lemma}[section]
\newtheorem{example}{Example}[section]
\newtheorem{Corollary}{Corollary}[section]
\newtheorem{rem}{Remark}[section]
\numberwithin{equation}{section}
\def\vector{\mathrm{vec}}
\journal{
        }
\begin{document}
\begin{frontmatter}
\title{Solutions and perturbation analysis of the matrix equation $X-\sum\limits_{i=1}^{m}A_{i}^*X^{-1}A_{i}=Q$
 \tnoteref{label1} }
\tnotetext[label1]{The work was supported in part by Natural Science Foundation of Shandong Province (ZR2012AQ004).}
\author
       {Jing Li
        }
\ead{xlijing@sdu.edu.cn}

\address
               {School of Mathematics and Statistics, Shandong University at Weihai, Weihai 264209, P.R. China}

\begin{abstract}
Consider the nonlinear matrix equation $X-\sum\limits_{i=1}^{m}A_{i}^*X^{-1}A_{i}=Q$. This paper shows that there exists a unique positive
definite solution to the equation without any restriction on $A_{i}$. Three perturbation bounds for the unique solution to the equation are evaluated. A
backward error of an approximate solution for the unique solution to the equation is derived.
Explicit expressions of the condition number for the unique solution to the equation are obtained. The theoretical results are illustrated by numerical examples.
\end{abstract}

\begin{keyword}
nonlinear matrix equation \sep positive definite solution \sep
perturbation bound\sep backward error\sep condition number
\end{keyword}
\end{frontmatter}

\section{Introduction}
In this paper the nonlinear matrix equation
\begin{equation}\label{eq1}                                   
X-\sum\limits_{i=1}^{m}A_{i}^*X^{-1}A_{i}=Q
\end{equation}
is investigated, where $A_{1}, A_{2}, \ldots , A_{m}$ are $n\times n$ complex matrices, $m$ is a positive integer and $Q$ is a positive definite matrix. Here, $A_{i}^{*}$ denotes
the conjugate transpose of the matrix $A_{i}$.

This type of nonlinear matrix
equations arises in many practical applications. The equation $X-A^{*}X^{-1}A=Q$
which is representative of Eq.(\ref{eq1}) for $m=1$ comes from ladder networks, dynamic
programming, control theory, stochastic filtering, statistics and
so forth \cite{s1,s2,s3,s4,s5,s6}. When $m>1$, Eq.(\ref{eq1}) is recognized as playing an important role in
solving a system of linear equations in many physical calculations.

For the equation $X\pm A^{*}X^{-1}A=Q,$ there were many contributions
in the literature to the theory, applications and numerical
solutions \cite{s8,s9,s10,s11,s12,s13,s22,s23,s24,s25,s26}.
The general equations such as
 $X\pm A^*X^{-2}A=Q$ \cite{s14,s15,s16,s17}, $X^{s}\pm A^*X^{-t}A=Q$ \cite{s18,s19,s20,s21}
 and $X\pm A^*X^{-q}A=Q$ \cite{s27,s28,s39} were also investigated by many scholars. In addition,
He and Long \cite{s30} and Duan et al. \cite{s31} have studied the similar equation
$X+\sum\limits_{i=1}^{m}A_{i}^*X^{-1}A_{i}=I$. sarhan et al. \cite{s38} discussed the existence of extremal positive definite solution of the matrix equation $X^{r}+\sum\limits_{i=1}^{m}A_{i}^{*}X^{\delta_{i}}A_{i}=I$. Duan et al. \cite{s29} proved that the equation
$X-\sum\limits_{i=1}^{m}A_{i}^*X^{\delta_{i}}A_{i}=Q\;(0<|\delta_{i}|<1)$
has a unique positive definite solution. They also proposed an iterative method
for obtaining the unique positive definite solution. However, to our best knowledge, there has been no perturbation analysis for Eq.(\ref{eq1}) in the known literatures.

The rest of the paper is organized as follows. Section 2 gives some preliminary lemmas that will be needed to develop this work.
Section 3 proves the existence of a unique positive definite solution to Eq.(\ref{eq1}) without any restriction on $A_{i}$.
Section 4 gives three perturbation bounds for
 the unique solution to Eq.(\ref{eq1}).
 Section 5 derives a
backward error of an approximate solution for the unique solution to Eq.(\ref{eq1}).
 Furthermore, in Section 6,  the condition number of the unique solution to Eq.(\ref{eq1}) is discussed. Finally, several numerical
  examples are presented in Section 7.

We denote by $\mathcal{C}^{n\times n}$ the set of $n\times n$
complex matrices, by $\mathcal{H}^{n\times n}$ the set of $n\times n$
Hermitian matrices, by $I$ the identity matrix, by $\textbf{i}$ the imaginary unit, by $\|\cdot\|$ the spectral
norm, by $\|\cdot\|_{F}$ the Frobenius norm and by $\lambda_{\max}(M)$ and
$\lambda_{\min}(M)$ the maximal and minimal eigenvalues of $M$,
respectively. For $A=(a_{1},\dots, a_{n})=(a_{ij})\in
\mathcal{C}^{n\times n}$ and a matrix $B$, $A\otimes B=(a_{ij}B)$
is a Kronecker product, and $\vector A$
is a vector defined by $\vector A=(a_{1}^{T},\dots,
a_{n}^{T})^{T}$. For $X, Y\in\mathcal{H}^{n\times n}$, we write $X\geq Y$(resp. $X>Y)$ if
$X-Y$ is Hermitian positive semi-definite (resp. definite).
\section{Preliminaries}
\begin{lemma}\label{lem1}                   
\cite{s32}. If $A\geq B>0,$ then $0<A^{-1}\leq B^{-1}.$
\end{lemma}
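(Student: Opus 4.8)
The plan is to exploit the order-reversing behaviour of inversion, reducing the general comparison to a comparison against the identity by means of a congruence transformation. The point is that one cannot simply invert the inequality $A\geq B$ entrywise or term by term, because matrix multiplication and inversion do not respect the L\"owner order in general; the congruence trick is precisely what converts the non-commuting inversion into a coordinatewise one.

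First I would note that $A\geq B>0$ forces $A>0$, so that $A^{-1}$ and $B^{-1}$ both exist and are themselves positive definite; in particular $A^{-1}>0$, which already yields the left half of the claim. Since $B>0$ possesses an invertible positive definite square root $B^{1/2}$, I would then apply the congruence by $B^{-1/2}$ to the hypothesis $A\geq B$. Because congruence by an invertible matrix preserves the L\"owner order, this gives
\[
B^{-1/2}AB^{-1/2}\ \geq\ B^{-1/2}BB^{-1/2}=I .
\]
Writing $C=B^{-1/2}AB^{-1/2}$, we obtain a Hermitian positive definite matrix with $C\geq I$.

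Next I would invoke the spectral theorem: every eigenvalue of $C$ is at least $1$, hence every eigenvalue of $C^{-1}$ is at most $1$, so $C^{-1}\leq I$. Finally I would unwind the congruence. Since $C^{-1}=B^{1/2}A^{-1}B^{1/2}$, the inequality reads $B^{1/2}A^{-1}B^{1/2}\leq I$, and applying congruence by $B^{-1/2}$ once more yields
\[
A^{-1}=B^{-1/2}\bigl(B^{1/2}A^{-1}B^{1/2}\bigr)B^{-1/2}\ \leq\ B^{-1/2}IB^{-1/2}=B^{-1},
\]
which is exactly $0<A^{-1}\leq B^{-1}$.

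The two facts used implicitly are that congruence by an invertible $M$ preserves $\geq$ (immediate from $x^{*}(M^{*}XM)x=(Mx)^{*}X(Mx)$) and the scalar reduction $C\geq I\Leftrightarrow C^{-1}\leq I$ for positive definite $C$. I expect the congruence reduction to be the main conceptual step, since it is what replaces the intractable direct inversion of the inequality by a diagonalizable comparison against $I$; everything after that is routine.
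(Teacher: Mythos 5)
Your proof is correct. Note that the paper offers no proof of this lemma at all---it is imported as a known result from the matrix-inequalities monograph cited as \cite{s32}---so there is no internal argument to compare yours against. Your congruence-and-spectral argument (reduce to $C=B^{-1/2}AB^{-1/2}\geq I$, invert eigenvalues to get $C^{-1}\leq I$, then undo the congruence) is the standard textbook proof of the antitonicity of inversion on the positive definite cone, and each step you use---positive definiteness of $A$, preservation of the L\"owner order under congruence by an invertible matrix, and the equivalence $C\geq I\Leftrightarrow C^{-1}\leq I$ for Hermitian positive definite $C$---is sound.
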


\begin{lemma}\label{lem2}                   
\cite{s26}. For every positive definite matrix $X\in\mathcal{H}^{n\times n}$, if $X+\Delta X\geq (1/\nu) X>0,$ then
$$\|X^{-\frac{1}{2}}A^{*}((X+\Delta X)^{-1}-X^{-1})AX^{-\frac{1}{2}}\|
\leq (\|X^{-\frac{1}{2}}\Delta X X^{-\frac{1}{2}}\|+\nu \|X^{-\frac{1}{2}}\Delta X X^{-\frac{1}{2}}\|^{2})\|X^{-\frac{1}{2}}AX^{-\frac{1}{2}}\|^{2}.$$
\end{lemma}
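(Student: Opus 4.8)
The plan is to reduce the left-hand side to a manageable expression by means of a second-order resolvent identity, and then to control the two resulting terms using submultiplicativity of the spectral norm together with the hypothesis $X+\Delta X\geq(1/\nu)X$. Write $Y=X+\Delta X$. First I would establish the identity
$$Y^{-1}-X^{-1}=-X^{-1}\Delta X\,X^{-1}+X^{-1}\Delta X\,Y^{-1}\Delta X\,X^{-1},$$
which follows by iterating the first-order relation $Y^{-1}-X^{-1}=-X^{-1}\Delta X\,Y^{-1}$ once, i.e. by substituting $Y^{-1}=X^{-1}-Y^{-1}\Delta X\,X^{-1}$ into the trailing resolvent factor.

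Next I introduce the abbreviations $W=X^{-1/2}AX^{-1/2}$ and $E=X^{-1/2}\Delta X\,X^{-1/2}$, so that $\Delta X=X^{1/2}EX^{1/2}$, $\|W\|=\|X^{-1/2}AX^{-1/2}\|$ and $\|E\|=\|X^{-1/2}\Delta X\,X^{-1/2}\|$. Conjugating the identity above by $X^{-1/2}A^{*}$ on the left and $AX^{-1/2}$ on the right, and using $X^{-1}=X^{-1/2}X^{-1/2}$ together with $X^{-1/2}\Delta X=EX^{1/2}$ and $\Delta X\,X^{-1/2}=X^{1/2}E$, the expression collapses to
$$X^{-1/2}A^{*}(Y^{-1}-X^{-1})AX^{-1/2}=-W^{*}EW+W^{*}E\,(X^{1/2}Y^{-1}X^{1/2})\,EW.$$
The payoff of this rearrangement is that the whole $\Delta X$-dependence is now carried by the Hermitian factor $E$, entering linearly in the first term and quadratically in the second, while all $A$-dependence is packed into $W$.

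Applying the triangle inequality and submultiplicativity of the spectral norm, with $\|W^{*}\|=\|W\|$, then yields
$$\|X^{-1/2}A^{*}(Y^{-1}-X^{-1})AX^{-1/2}\|\leq\|E\|\,\|W\|^{2}+\|X^{1/2}Y^{-1}X^{1/2}\|\,\|E\|^{2}\,\|W\|^{2}.$$
It remains to bound $\|X^{1/2}Y^{-1}X^{1/2}\|$, and this is where the hypothesis enters: from $Y=X+\Delta X\geq(1/\nu)X>0$, Lemma \ref{lem1} gives $0<Y^{-1}\leq\nu X^{-1}$, hence $X^{1/2}Y^{-1}X^{1/2}\leq\nu I$; since this matrix is Hermitian positive definite, its spectral norm equals its largest eigenvalue, which is at most $\nu$. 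Substituting $\|X^{1/2}Y^{-1}X^{1/2}\|\leq\nu$ and factoring out $\|W\|^{2}$ produces exactly the claimed estimate.

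I expect the only real obstacle to be the bookkeeping in the conjugation step. One must select the correct second-order form of the resolvent identity so that the linear-in-$E$ term appears with coefficient $1$ and the quadratic term cleanly isolates the factor $X^{1/2}Y^{-1}X^{1/2}$ whose norm the hypothesis controls; an asymmetric or first-order-only split would fail to reproduce the precise right-hand side in the statement. Everything after that reduction is routine norm estimation.
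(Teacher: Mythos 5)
Your proof is correct and follows the technique this paper itself uses for the analogous Lemma~\ref{lem6} (Lemma~\ref{lem2} itself is stated without proof, cited from \cite{s26}): a second-order resolvent expansion of $(X+\Delta X)^{-1}-X^{-1}$, followed by the triangle inequality, submultiplicativity of the spectral norm, and the hypothesis combined with Lemma~\ref{lem1} to bound the leftover resolvent factor by $\nu$. Your choice of the symmetric form of the identity, which places $(X+\Delta X)^{-1}$ in the middle so that conjugation by $X^{-1/2}$ cleanly isolates $X^{1/2}(X+\Delta X)^{-1}X^{1/2}\leq\nu I$, is exactly the adjustment needed for this weighted version, as you yourself observe; the asymmetric form used in the proof of Lemma~\ref{lem6}, with the resolvent at the end and the cruder bound $\|(X+\Delta X)^{-1}\|\leq\nu$, would not factor as neatly here.
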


\begin{lemma}\label{lem4}                   
\cite{s35}. The matrix differentiation has the following properties:
\begin{enumerate}
  \item[(1)]$\textmd{d}(F_{1}\pm F_{2})=\textmd{d}F_{1}\pm \textmd{d}F_{2};$
  \item[(2)]$d(kF)=k(dF),$ where $k$ is a complex number;
  \item[(3)]$d(F^{*})=(dF)^{*};$
  \item[(4)] $d(F_{1}F_{2}F_{3})=(dF_{1})F_{2}F_{3}+F_{1}(dF_{2})F_{3}+F_{1}F_{2}(dF_{3});$
  \item[(5)] $dF^{-1}=-F^{-1}(dF)F^{-1};$
  \item[(6)] $dF=0,$ where $F$ is a constant matrix.
\end{enumerate}
\end{lemma}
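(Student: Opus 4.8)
The plan is to reduce every identity to the corresponding scalar rule by working entrywise. Writing $F=(f_{ij})$, the matrix differential is defined by $(\mathrm{d}F)_{ij}=\mathrm{d}f_{ij}$, so that each matrix identity becomes a collection of scalar differential identities. With this convention properties (1), (2) and (6) are immediate: $(\mathrm{d}(F_{1}\pm F_{2}))_{ij}=\mathrm{d}((F_{1})_{ij}\pm(F_{2})_{ij})$ and $(\mathrm{d}(kF))_{ij}=\mathrm{d}(kf_{ij})$ follow from the linearity of the scalar differential, while a constant matrix has constant entries and hence zero differential.

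For property (3), recall that $(F^{*})_{ij}=\overline{f_{ji}}$. Since complex conjugation acts separately on the real and imaginary parts and the differential is linear, $\mathrm{d}\overline{z}=\overline{\mathrm{d}z}$ for any scalar $z$; applying this entrywise gives $(\mathrm{d}(F^{*}))_{ij}=\overline{\mathrm{d}f_{ji}}=\overline{(\mathrm{d}F)_{ji}}=((\mathrm{d}F)^{*})_{ij}$. For property (4), I would first establish the two-factor Leibniz rule. Since $(F_{1}F_{2})_{ij}=\sum_{k}(F_{1})_{ik}(F_{2})_{kj}$, the scalar product rule yields $\mathrm{d}(F_{1}F_{2})_{ij}=\sum_{k}[\mathrm{d}(F_{1})_{ik}\,(F_{2})_{kj}+(F_{1})_{ik}\,\mathrm{d}(F_{2})_{kj}]$, that is, $\mathrm{d}(F_{1}F_{2})=(\mathrm{d}F_{1})F_{2}+F_{1}(\mathrm{d}F_{2})$. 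Iterating this on $F_{1}F_{2}F_{3}=(F_{1}F_{2})F_{3}$ produces the stated three-factor formula.

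The only identity that requires an actual derivation is property (5); this is the main step, though it too is routine. I would obtain it by differentiating the constant relation $FF^{-1}=I$. Applying the two-factor product rule from (4) together with the constant rule from (6) gives $(\mathrm{d}F)F^{-1}+F\,\mathrm{d}(F^{-1})=\mathrm{d}I=0$. Solving for $\mathrm{d}(F^{-1})$ by left-multiplying with $F^{-1}$ yields $\mathrm{d}(F^{-1})=-F^{-1}(\mathrm{d}F)F^{-1}$, as claimed. The only subtlety to check here is that $F$ is invertible throughout, so that $F^{-1}$ is differentiable and the manipulation is legitimate; after that no further estimates are needed.
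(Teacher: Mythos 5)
Your proposal is correct. Note, however, that the paper itself gives no proof of this lemma at all: it is quoted verbatim from the textbook cited as \cite{s35}, so there is no argument in the paper to compare yours against. What you have written is the standard derivation one would find in such a reference: entrywise reduction to the scalar rules handles (1), (2), (3), (4), (6), and (5) follows by applying the product rule and the constant rule to $FF^{-1}=I$ and solving for $\mathrm{d}(F^{-1})$. The one point you flag but do not fully discharge is the differentiability of $F^{-1}$ itself, which is needed before the product rule may be applied to $FF^{-1}$; this is closed by observing that $F^{-1}=\mathrm{adj}(F)/\det(F)$ by Cramer's rule, so each entry of $F^{-1}$ is a rational function of the entries of $F$ with nonvanishing denominator on the set where $F$ is invertible, hence differentiable there. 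With that remark added, your proof is complete and is exactly the argument the cited source intends.
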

\section{Positive definite solution of the matrix Eq.(1.1)}
In this section, the existence of a unique positive definite solution of Eq.(\ref{eq1}) is proved. Moreover, some properties of the unique positive definite solution of Eq.(\ref{eq1}) are obtained.

\begin{theorem} \label{thm1}                                                           
If $F(X)=Q+\sum\limits_{i=1}^{m}A_{i}^*X^{-1}A_{i},$ then $F([Q, Q+\sum\limits_{i=1}^{m}A_{i}^*Q^{-1}A_{i}])\subseteq
 [Q, Q+\sum\limits_{i=1}^{m}A_{i}^*Q^{-1}A_{i}].$
\end{theorem}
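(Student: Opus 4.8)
The plan is to unwind the order-interval notation and verify the two Loewner inequalities separately. Membership $X\in[Q,\,Q+\sum_{i=1}^{m}A_{i}^*Q^{-1}A_{i}]$ means precisely
$$Q\leq X\leq Q+\sum_{i=1}^{m}A_{i}^*Q^{-1}A_{i},$$
and I must show that $F(X)$ satisfies the same sandwich. So I would fix such an $X$ and first note that $X\geq Q>0$ forces $X>0$, hence $X^{-1}$ exists and is positive definite; this legitimizes writing $F(X)=Q+\sum_{i=1}^{m}A_{i}^*X^{-1}A_{i}$.

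For the lower bound, the argument is immediate: since $X^{-1}>0$, each congruence $A_{i}^*X^{-1}A_{i}$ is positive semi-definite, so $\sum_{i=1}^{m}A_{i}^*X^{-1}A_{i}\geq 0$ and therefore $F(X)\geq Q$. Only positivity of $X$ is used here, not either endpoint of the interval in a nontrivial way.

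For the upper bound, the engine is the order-reversing property of matrix inversion. From $X\geq Q>0$ I would invoke Lemma~\ref{lem1} to obtain $0<X^{-1}\leq Q^{-1}$. Congruence by a fixed matrix preserves the Loewner order, so $A_{i}^*X^{-1}A_{i}\leq A_{i}^*Q^{-1}A_{i}$ for each $i$; summing over $i$ gives $\sum_{i=1}^{m}A_{i}^*X^{-1}A_{i}\leq\sum_{i=1}^{m}A_{i}^*Q^{-1}A_{i}$, and adding $Q$ yields $F(X)\leq Q+\sum_{i=1}^{m}A_{i}^*Q^{-1}A_{i}$. Combining the two bounds establishes the claimed inclusion.

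I do not anticipate a genuine obstacle: the only substantive ingredient is Lemma~\ref{lem1}, together with the elementary facts that $A_{i}^*(\cdot)A_{i}$ is monotone and additive with respect to the Loewner order. The one point worth flagging is that the upper bound of the argument uses only the left endpoint $X\geq Q$ of the interval (via inversion), while the lower bound uses only $X>0$; the right endpoint $X\leq Q+\sum_{i=1}^{m}A_{i}^*Q^{-1}A_{i}$ is not needed to prove the self-map property, which makes the result slightly stronger than its statement suggests.
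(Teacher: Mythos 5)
Your proof is correct and follows essentially the same route as the paper: Lemma~\ref{lem1} gives $0<X^{-1}\leq Q^{-1}$, and then positivity of the congruences $A_{i}^*X^{-1}A_{i}$ yields the lower bound while order-preservation of congruence yields the upper bound. Your added observation that the right endpoint of the interval is never used (so $F$ maps all of $[Q,\infty)$ into the interval) is accurate but does not change the substance of the argument.
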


\begin{proof}{Let $\Omega =[Q,Q+\sum\limits_{i=1}^{m}A_{i}^*Q^{-1}A_{i}]$. By Lemma \ref{lem1}, we obtain
$0<X^{-1}\leq Q^{-1}$ for every $X\in \Omega$. Applying Eq.(\ref{eq1}) yields
$Q\leq F(X)\leq Q+\sum\limits_{i=1}^{m}A_{i}^*Q^{-1}A_{i}$. Therefore $F(\Omega)\subseteq\Omega$.}
\end{proof}

\begin{theorem}     \label{thm5}                                                                      
There exists a unique positive definite solution $X$ to
Eq.(\ref{eq1}) and the iteration
\begin{equation}      \label{eq6}                                                            
X_0>0, \;\;X_n=Q+\sum\limits_{i=1}^{m}A_{i}^*X_{n-1}^{-1}A_{i},\;\;\;n=1,2,\cdots
\end{equation}
 converges to $X$.
\end{theorem}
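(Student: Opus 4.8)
The plan is to view Eq.(\ref{eq1}) as the fixed-point equation $X=F(X)$ for $F(X)=Q+\sum_{i=1}^{m}A_i^*X^{-1}A_i$ and to prove that $F$ is a contraction on the order interval $\Omega=[\,Q,\;Q+\sum_{i=1}^{m}A_i^*Q^{-1}A_i\,]$. Two structural observations come first. By Lemma \ref{lem1}, $F$ is order-reversing: if $Q<X\le Y$ then $Y^{-1}\le X^{-1}$, hence $F(Y)\le F(X)$. Moreover, every positive definite solution automatically lies in $\Omega$, because $X=F(X)\ge Q$ forces $X^{-1}\le Q^{-1}$ and therefore $X=Q+\sum_{i}A_i^*X^{-1}A_i\le Q+\sum_{i}A_i^*Q^{-1}A_i$; thus it suffices to prove existence and uniqueness inside $\Omega$, where Theorem \ref{thm1} guarantees $F(\Omega)\subseteq\Omega$. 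For an arbitrary start $X_0>0$ the same inequalities give $X_1\ge Q$ and then $X_2\in\Omega$, so after at most two steps the iteration (\ref{eq6}) remains in $\Omega$ and only its behaviour there must be analysed.

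The decisive ingredient is a contraction estimate, for which Lemma \ref{lem2} is tailor-made. Taking $X=X_{n-1}$ and $E=X^{-1/2}(X_n-X_{n-1})X^{-1/2}$, I would write $X_{n+1}-X_n=\sum_{i}A_i^*(X_n^{-1}-X_{n-1}^{-1})A_i$, apply Lemma \ref{lem2} to the summands and combine them (a Cauchy--Schwarz step across the sum) to obtain
\[
\bigl\|X^{-1/2}(X_{n+1}-X_n)X^{-1/2}\bigr\|\le\bigl(\|E\|+\nu\|E\|^{2}\bigr)\,\kappa,\qquad \kappa=\bigl\|I-X^{-1/2}QX^{-1/2}\bigr\|.
\]
What makes the argument succeed \emph{with no restriction on} $A_i$ is that $\kappa$ is dictated by the equation rather than by the sizes of the $A_i$: since $\sum_{i}A_i^*X^{-1}A_i=X-Q$, we have $\sum_{i}X^{-1/2}A_i^*X^{-1}A_iX^{-1/2}=I-X^{-1/2}QX^{-1/2}$, so $\kappa=1-\lambda_{\min}(X^{-1/2}QX^{-1/2})<1$. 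Because $Q\le X\le Q+\sum_{i}A_i^*Q^{-1}A_i$ throughout $\Omega$, this factor is bounded by a fixed $1-\delta<1$ uniformly in $X$, which supplies the contraction.

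With the per-step estimate in hand the conclusion is routine: consecutive differences decay geometrically, so $\{X_n\}$ is Cauchy and converges to some $X_*\in\Omega$ (the interval being closed); letting $n\to\infty$ in (\ref{eq6}) shows $X_*=F(X_*)$, i.e. $X_*$ solves Eq.(\ref{eq1}). Uniqueness follows by applying the same bound to two solutions in $\Omega$, which forces their distance to vanish, and this is global uniqueness since, as noted, every positive definite solution lies in $\Omega$. The main obstacle is exactly the contraction estimate. A direct spectral-norm bound on $\sum_i A_i^*(X_n^{-1}-X_{n-1}^{-1})A_i$ only yields the factor $(\sum_i\|A_i\|^2)/\lambda_{\min}(Q)^2$, which is below $1$ only under a restriction on the $A_i$; Lemma \ref{lem2} removes this by replacing it with $1-\lambda_{\min}(X^{-1/2}QX^{-1/2})$. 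The remaining care lies in dominating the quadratic term $\nu\|E\|^{2}$ and in handling the fact that the natural norm is weighted by the varying iterate $X$; both stay under control because all iterates are confined to the compact set $\Omega$, and one may, if needed, first use the order-reversing property to trap the iterates between monotone even- and odd-indexed subsequences before invoking the estimate to identify their common limit.
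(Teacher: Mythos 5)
There is a genuine gap: the contraction estimate at the heart of your argument is false, and the step that produces it uses an identity where it does not hold. Lemma \ref{lem2}, applied with base point $X=X_{n-1}$ and summed over $i$, yields the factor $\sum_{i}\|X^{-1/2}A_iX^{-1/2}\|^{2}$, not $\kappa=\|I-X^{-1/2}QX^{-1/2}\|$. To convert the former into the latter you invoke $\sum_{i}A_i^*X^{-1}A_i=X-Q$, but this identity holds only when $X$ itself solves Eq.(\ref{eq1}); at an iterate $X=X_{n-1}$ one has instead $\sum_{i}A_i^*X_{n-1}^{-1}A_i=X_{n}-Q$, which is not comparable to $X_{n-1}-Q$. (There is a second, smaller problem even at the exact solution when $m>1$: $\sum_{i}\|X^{-1/2}A_iX^{-1/2}\|^{2}=\sum_{i}\|X^{-1/2}A_i^*X^{-1}A_iX^{-1/2}\|$ is a sum of norms of positive semidefinite matrices, which can exceed the norm of their sum $\|I-X^{-1/2}QX^{-1/2}\|$ by a factor up to $m$.) A scalar example kills the claimed inequality outright: take $m=n=1$, $Q=1$, $A_1=10$, $X_0=Q$. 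Then $X_1=101$ and $X_2=1+100/101\approx 1.99$, while with base point $X_0$ your factor is $\kappa=|1-Q/X_0|=0$, so your bound would force $X_2=X_1$. The failure is structural, not cosmetic: $F$ is simply not a contraction on $\Omega$ in the spectral norm (weighted or not) without a restriction on the $A_i$; in the same example $|F(1)-F(1.1)|\approx 9.1\gg 0.1$, so consecutive differences need not decay per step, and your Cauchy argument and uniqueness argument both collapse with the estimate.

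What does behave well without any restriction on the $A_i$ is $F^{2}$, and this is exactly how the paper proceeds: it proves the inequality $F^{2}(tX)\geq t(1+\eta(t))F^{2}(X)$ for $0<t<1$ (Lemma \ref{lem3}), shows the iterates from $X_0=Q$ split into a monotone increasing even-indexed sequence and a monotone decreasing odd-indexed sequence whose limits are fixed points of $F^{2}$, identifies these limits by the supremum argument $t_{0}=\sup\{t:\,Y_1\geq tY_2\}$ (which uses Lemma \ref{lem3} to contradict $t_0<1$), and finally sandwiches the iterates from an arbitrary $X_0>0$ between $F^{2k}(Q)$ and $F^{2k-1}(Q)$. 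Your closing remark about trapping iterates between even and odd subsequences points in the right direction, but the whole difficulty is then to show the two subsequential limits coincide—they are a priori only fixed points of $F^{2}$—and that is precisely what Lemma \ref{lem3} accomplishes; a false one-step contraction bound cannot substitute for it. If you insist on a genuine fixed-point/contraction route with no hypotheses on $A_i$, you must change the metric: in the Thompson metric $d(X,Y)=\|\log(X^{-1/2}YX^{-1/2})\|$ the maps $X\mapsto X^{-1}$ and $X\mapsto\sum_i A_i^*XA_i$ are non-expansive and $X\mapsto Q+X$ is a strict contraction on order-bounded sets, which does prove the theorem—but that is a different argument from the one you wrote, and different from the paper's.
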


To prove the above theorem, we first verify the following lemma.

\begin{lemma} \label{lem3}                                                       
 Let $F(X)=Q+\sum\limits_{i=1}^{m}A_{i}^*X^{-1}A_{i}$. If $0<t<1$ and $X\in[Q,Q+\sum\limits_{i=1}^{m}A_{i}^*Q^{-1}A_{i}],$ then
\begin{equation*}
F^2(tX)\geq t(1+\eta(t))F^2(X),
\end{equation*}
where$$\eta(t)=\frac{(1-t)\lambda_{\min}(Q)}{t\left(\lambda_{\max}(Q)+
 \frac{\sum\limits_{i=1}^{m}\lambda_{\max}(A_{i}^{*}A_{i})}{\lambda_{\min}(Q)}\right)}.$$
\end{lemma}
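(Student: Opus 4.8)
The plan is to avoid estimating $F^{2}$ directly and instead reduce everything to the inverse-monotonicity of Lemma~\ref{lem1} together with a pair of eigenvalue bounds on $F$. Writing $G(X)=\sum_{i=1}^{m}A_{i}^{*}X^{-1}A_{i}$ so that $F(X)=Q+G(X)$, the starting point is the identity $F(tX)=Q+\frac{1}{t}G(X)$, which follows from $(tX)^{-1}=\frac{1}{t}X^{-1}$. Since $0<t<1$ forces $\frac{1}{t}\geq 1$, I compare $F(tX)$ with $\frac{1}{t}F(X)=\frac{1}{t}Q+\frac{1}{t}G(X)$; their difference is $\frac{1}{t}F(X)-F(tX)=\frac{1-t}{t}Q\geq 0$. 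Hence $0<F(tX)\leq\frac{1}{t}F(X)$, and Lemma~\ref{lem1} delivers the key inverse estimate $(F(tX))^{-1}\geq t\,(F(X))^{-1}$.

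Next I would apply $F$ once more. With $F^{2}=F\circ F$, the estimate above gives
$$F^{2}(tX)=Q+\sum_{i=1}^{m}A_{i}^{*}(F(tX))^{-1}A_{i}\geq Q+t\sum_{i=1}^{m}A_{i}^{*}(F(X))^{-1}A_{i}=Q+t\bigl(F^{2}(X)-Q\bigr)=tF^{2}(X)+(1-t)Q,$$
where I used $\sum_{i}A_{i}^{*}(F(X))^{-1}A_{i}=F^{2}(X)-Q$. Comparing this with the target $t(1+\eta(t))F^{2}(X)=tF^{2}(X)+t\eta(t)F^{2}(X)$, it remains only to show $(1-t)Q\geq t\eta(t)F^{2}(X)$. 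Since $t\eta(t)=\frac{(1-t)\lambda_{\min}(Q)}{K}$ with $K=\lambda_{\max}(Q)+\frac{\sum_{i}\lambda_{\max}(A_{i}^{*}A_{i})}{\lambda_{\min}(Q)}$, cancelling the positive factor $1-t$ reduces the whole lemma to the clean bound $Q\geq\frac{\lambda_{\min}(Q)}{K}F^{2}(X)$.

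To finish I would establish this last inequality through a two-sided eigenvalue estimate. Because $F(X)=Q+G(X)\geq Q>0$, Lemma~\ref{lem1} gives $F^{2}(X)=Q+G(F(X))\leq Q+G(Q)$; then subadditivity of $\lambda_{\max}$ together with $\lambda_{\max}(A_{i}^{*}Q^{-1}A_{i})\leq\lambda_{\max}(A_{i}^{*}A_{i})/\lambda_{\min}(Q)$ yields $\lambda_{\max}(F^{2}(X))\leq K$, that is, $F^{2}(X)\leq K\,I$. Combining this with $Q\geq\lambda_{\min}(Q)\,I$ gives $\frac{\lambda_{\min}(Q)}{K}F^{2}(X)\leq\lambda_{\min}(Q)\,I\leq Q$, which closes the argument.

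I expect the main obstacle to be not the monotonicity manipulations, which are routine uses of Lemma~\ref{lem1}, but pinning down the exact multiplicative constant $1+\eta(t)$. The inverse-monotonicity step alone produces only the slack term $(1-t)Q$; the entire content of the lemma is that this slack can be charged against $t\eta(t)F^{2}(X)$. Matching it exactly is what forces the two eigenvalue bounds $F^{2}(X)\leq K\,I$ and $Q\geq\lambda_{\min}(Q)\,I$, whose constants combine to produce precisely the denominator appearing in $\eta(t)$. Keeping this bookkeeping consistent with the stated form of $\eta(t)$ is the delicate part of the proof.
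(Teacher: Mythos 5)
Your proof is correct and takes essentially the same route as the paper's: both rest on the decomposition $F^{2}(tX)-tF^{2}(X)=(1-t)Q+\sum_{i=1}^{m}A_{i}^{*}\left[(F(tX))^{-1}-t(F(X))^{-1}\right]A_{i}$, with the bracketed term nonnegative by Lemma~\ref{lem1}, followed by charging $(1-t)Q$ against $t\eta(t)F^{2}(X)$ using $Q\geq\lambda_{\min}(Q)I$ and $F^{2}(X)\leq\left(\lambda_{\max}(Q)+\frac{\sum_{i=1}^{m}\lambda_{\max}(A_{i}^{*}A_{i})}{\lambda_{\min}(Q)}\right)I$. The only cosmetic difference is that you derive this last bound directly from $F(X)\geq Q$ rather than citing Theorem~\ref{thm1}, which incidentally shows the hypothesis $X\in[Q,\,Q+\sum_{i=1}^{m}A_{i}^{*}Q^{-1}A_{i}]$ is not actually needed.
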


\begin{proof} {According to Theorem \ref{thm1}, for every $ X\in[Q,Q+\sum\limits_{i=1}^{m}A_{i}^*Q^{-1}A_{i}]$, we have $F(X)\in[Q,Q+\sum\limits_{i=1}^{m}A_{i}^*Q^{-1}A_{i}]$ and $F^{2}(X)\in[Q,Q+\sum\limits_{i=1}^{m}A_{i}^*Q^{-1}A_{i}]$. Hence we have
\begin{eqnarray*}
 & & F^{2}(t X)-t(1+\eta(t))F^{2}(X)\\
 &=&(1-t)Q+t\sum\limits_{i=1}^{m}A_{i}^*\left[(tQ+\sum\limits_{i=1}^{m}A_{i}^*X^{-1}A_{i})^{-1}
 -(Q+\sum\limits_{i=1}^{m}A_{i}^*X^{-1}A_{i})^{-1}\right]A_{i}\\
 &&-\frac{(1-t)\lambda_{\min}(Q)}{\lambda_{\max}(Q)+
 \frac{\sum\limits_{i=1}^{m}\lambda_{\max}(A_{i}^{*}A_{i})}{\lambda_{\min}(Q)}}F^{2}(X)\\
 &\geq&
 (1-t)\lambda_{\min}(Q)I-\frac{(1-t)\lambda_{\min}(Q)}{\lambda_{\max}(Q)+
 \frac{\sum\limits_{i=1}^{m}\lambda_{\max}(A_{i}^{*}A_{i})}{\lambda_{\min}(Q)}}
 \left(\lambda_{\max}(Q)+
 \frac{\sum\limits_{i=1}^{m}\lambda_{\max}(A_{i}^{*}A_{i})}{\lambda_{\min}(Q)}\right)I=0, \;0 < t < 1.
\end{eqnarray*}}
\end{proof}

{\bf Proof of Theorem \ref{thm5}}\; Let $F(X)=Q+\sum\limits_{i=1}^{m}A_{i}^*X^{-1}A_{i}$ and
$\Omega = [Q, I+\sum\limits_{i=1}^{m}A_{i}^*Q^{-1}A_{i}]$. The proof will be divided into
two steps. \

(1) We prove the special case of Theorem \ref{thm5} when
$X_{0}=Q.$

It is easy to check that
$$Q\leq X_1 = Q + \sum\limits_{i=1}^{m}A_{i}^*X_{0}^{-1}A_{i}=F(Q)=Q + \sum\limits_{i=1}^{m}A_{i}^*Q^{-1}A_{i},$$
$$Q\leq X_2 = Q+ \sum\limits_{i=1}^{m}A_{i}^*X_{1}^{-1}A_{i}=F^2(Q)\leq F(Q),$$
$$F^2(Q)\leq X_3=Q + \sum\limits_{i=1}^{m}A_{i}^*X_{2}^{-1}A_{i}=F^3(Q)\leq F(Q),$$
$$ F^2(Q)\leq X_4=Q + \sum\limits_{i=1}^{m}A_{i}^*X_{3}^{-1}A_{i}=F^4(Q)\leq F^3(Q).$$
By induction, it yields that
$$ Q\leq F^{2k}(Q)\leq F^{2k+2}(Q)\leq F^{2k+1}(Q)\leq
F^{2k-1}(Q)\leq Q + \sum\limits_{i=1}^{m}A_{i}^*Q^{-1}A_{i}, k\in \mathbb{Z}^{+}.$$ Hence the
sequences $\{F^{2k}(Q)\}$ and $\{F^{2k+1}(Q)\}$ are convergent.
Let $\lim\limits_{k\to\infty}F^{2k}(Q)=X^{(1)},$
$\lim\limits_{k\to\infty}F^{2k+1}(Q)=X^{(2)}.$ It is clear that
$X^{(1)}$
and $X^{(2)}$ are positive fixed points of $F^2(X)$. \\

In the following part, we first prove that $X^{(1)}=X^{(2)}$. Suppose that $Y_1$ and $Y_2$ are
two positive fixed points of $F^2$ in $\Omega$. We compute
\begin{eqnarray*}
Y_1&=&F^{2}(Y_1) \geq Q \geq
\displaystyle\frac{1}{1+\displaystyle\frac{\sum\limits_{i=1}^{m}\lambda_{\max}(A_{i}^{*}A_{i})}{\lambda_{\min}^{2}(Q)}}\,(Q + \sum\limits_{i=1}^{m}A_{i}^*Q^{-1}A_{i})\\
&\geq &
\displaystyle\frac{1}{1+\displaystyle\frac{\sum\limits_{i=1}^{m}\lambda_{\max}(A_{i}^{*}A_{i})}{\lambda_{\min}^{2}(Q)}}\,F^{2}(Y_2)
=t\,Y_2,\;\;t=\displaystyle\frac{1}{1+\displaystyle\frac{\sum\limits_{i=1}^{m}\lambda_{\max}(A_{i}^{*}A_{i})}{\lambda_{\min}^{2}(Q)}}.
\end{eqnarray*}
Let $t_{0} = \sup \{t| Y_1 \geq t Y_2 \}$. Then $1\leq  t_{0} <
+\infty$. On the contrary, suppose that $0 < t_{0} < 1$. Then $Y_1
\geq t_{0}Y_2$. According to Lemma \ref{lem3} and the monotonicity of
$F^2(X)$, we have $$Y_1 = F^{2}(Y_1) \geq F^{2}(t_{0}Y_2) \geq
(1+\eta(t_{0}))t_{0}F^{2}(Y_2)=(1+\eta(t_{0}))t_{0}Y_{2}.$$ By the definition of $\eta(t),$ we
obtain $(1+\eta(t_{0}))\,t_{0}
> t_{0}$, which is a contradiction to the definition of $t_{0}$. Hence we
have $t_{0} \geq 1$ and  $Y_1 \geq Y_2$. Similarly, we get $Y_1 \leq
Y_2$. Therefore $Y_1 = Y_2$, i.e., the equation $X = F^2(X)$ has
only one positive definite solution. Hence
$X^{(1)}=X^{(2)}$.

Second, we prove that $\lim\limits_{n\to\infty}X_{n}$ is the
unique fixed point of F in $\Omega$. By $X^{(1)}=X^{(2)}$, it
follows that $X^{(1)}=X^{(2)}=\lim\limits_{n\to\infty}F^n(Q)$ is the unique fixed point of
$F^{2}$. Moreover, the positive definite solution of equation
$F(X) = X$ solves $X = F^{2}(X)$. Therefore $F(X) = X$ has only
one positive definite solution and
$\lim\limits_{n\to\infty}F^n(Q)=\lim\limits_{n\to\infty}X_{n}$ is
the unique fixed point of $F$.

(2) We prove the case of Theorem \ref{thm5} when $X_{0}>0.$

From
iteration (\ref{eq6}), we obtain

    $$X_1\geq Q ,$$
    $$Q\leq X_2=Q+ \sum\limits_{i=1}^{m}A_{i}^*X_1^{-1}A_{i}\leq Q+\sum\limits_{i=1}^{m}A_{i}^*Q^{-1}A_{i}=F(Q)$$
and
$$F^{2}(Q)\leq X_{3}=I+\sum\limits_{i=1}^{m}A_{i}^{*}X_{2}^{-1}A_{i}\leq F(Q).$$
By induction, we have
\begin{equation*}
    F^{2k}(Q)\leq X_{2k+1}\leq F^{2k-1}(Q)\;\;\; \text{and}\;\;\;F^{2k-2}(Q)\leq X_{2k}\leq F^{2k-1}(Q).
\end{equation*}
Therefore
\begin{equation*}
    \lim_{k\to\infty}X_{k}=\lim_{n\to\infty}F^{n}(Q).
\end{equation*}
It follows that $\lim\limits_{k\to\infty}X_{k}$ is the unique
positive definite solution of Eq.(\ref{eq1}).
$\;\;\square$

\begin{theorem} \label{thm2}                                     
If $X$ is a positive definite solution of Eq.(\ref{eq1}), then $Q\leq X\leq Q+\sum\limits_{i=1}^{m}A_{i}^*Q^{-1}A_{i}.$
\end{theorem}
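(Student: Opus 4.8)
The plan is to read Eq.~(\ref{eq1}) in fixed-point form: any positive definite solution $X$ satisfies
$$
X = Q + \sum_{i=1}^{m} A_i^* X^{-1} A_i = F(X),
$$
and then to extract the two one-sided bounds directly from this identity. I would handle the lower bound first, since it requires nothing beyond positivity, and use it to bootstrap the upper bound.

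For the lower bound, I would observe that $X>0$ forces $X^{-1}>0$, so each summand $A_i^* X^{-1} A_i$ is Hermitian positive semi-definite (a congruence of the positive definite matrix $X^{-1}$), and hence the whole sum $\sum_{i=1}^{m} A_i^* X^{-1} A_i \geq 0$. Substituting into the fixed-point identity gives $X = Q + (\text{something}\ge 0) \geq Q$. This establishes $Q\le X$ with no restriction on the $A_i$.

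For the upper bound, I would feed the inequality $X\ge Q>0$ into Lemma~\ref{lem1}, which yields $0<X^{-1}\le Q^{-1}$. The key remaining step is the monotonicity of congruence: from $X^{-1}\le Q^{-1}$ I would conclude $A_i^* X^{-1} A_i \le A_i^* Q^{-1} A_i$ for each $i$, which follows because for any $z$ the scalar inequality $z^*A_i^*(Q^{-1}-X^{-1})A_i z = (A_i z)^*(Q^{-1}-X^{-1})(A_i z)\ge 0$ holds. Summing over $i$ and adding $Q$ to both sides of the fixed-point identity then gives
$$
X = Q + \sum_{i=1}^{m} A_i^* X^{-1} A_i \le Q + \sum_{i=1}^{m} A_i^* Q^{-1} A_i,
$$
which is exactly the claimed upper bound.

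I do not anticipate a genuine obstacle here: the statement is a short consequence of Lemma~\ref{lem1} together with the fact that the Löwner order is preserved under congruence $P\mapsto A^*PA$. The only point requiring a moment of care is making the lower bound $X\ge Q$ explicit \emph{before} invoking Lemma~\ref{lem1}, since the lemma needs the hypothesis $X\ge Q>0$ in order to invert the order; once that ordering of steps is fixed, both bounds drop out immediately.
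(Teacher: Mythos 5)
Your proposal is correct and follows essentially the same route as the paper: positivity of $X^{-1}$ gives $X\geq Q$, then Lemma~\ref{lem1} and monotonicity of the congruence $P\mapsto A_i^*PA_i$ give the upper bound. The paper's proof is just a terser version of yours, leaving the congruence step implicit.
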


\begin{proof}{That $X$ is a positive definite solution of Eq.(\ref{eq1}) implies $X>0.$ Then $X^{-1}>0$ and
$A_{i}^{*}X^{-1}A_{i}\geq 0.$ Hence $X=Q+\sum\limits_{i=1}^{m}A_{i}^*X^{-1}A_{i}\geq Q.$
 Consequently, $X^{-1}\leq Q^{-1}$ and $X\leq Q+\sum\limits_{i=1}^{m}A_{i}^*Q^{-1}A_{i}$.}
\end{proof}
\begin{theorem} \label{thm3}                                        
Every positive definite solution $X$ of Eq.(\ref{eq1}) is in $[
\beta I, \alpha I ]$, where $\alpha$ and $\beta$ are respectively the solutions of
the following equations
\begin{equation}\label{eq2}                                              
x=\lambda_{\max}(Q)+\frac{\sum\limits_{i=1}^{m}\lambda_{\max}(A_{i}^{*}A_{i})}
{\lambda_{\min}(Q)+\displaystyle\frac{\sum\limits_{i=1}^{m}\lambda_{\min}(A_{i}^{*}A_{i})}{x}},
\end{equation}
\begin{equation}\label{eq33}
x=\lambda_{\min}(Q)+\frac{\sum\limits_{i=1}^{m}\lambda_{\min}(A_{i}^{*}A_{i})}
{\lambda_{\max}(Q)+\displaystyle\frac{\sum\limits_{i=1}^{m}\lambda_{\max}(A_{i}^{*}A_{i})}{x}}.
\end{equation}
Moreover,
\begin{equation}\label{eq10}                                                
\lambda_{\min}(Q)\leq\beta\leq\alpha.
\end{equation}
\end{theorem}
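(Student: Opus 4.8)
The target bound $\beta I\le X\le\alpha I$ is equivalent to the two scalar estimates $\lambda_{\min}(X)\ge\beta$ and $\lambda_{\max}(X)\le\alpha$, so the plan is to read off from the identity $X=Q+\sum_{i=1}^{m}A_{i}^{*}X^{-1}A_{i}$ a coupled pair of scalar inequalities linking $\lambda_{\min}(X)$ and $\lambda_{\max}(X)$, and then to compare these against the defining equations Eq.(\ref{eq2}) and Eq.(\ref{eq33}). Note that the existence of a solution $X$ (with $X\ge Q>0$) is already in hand from Theorem \ref{thm5}, so only the location of its spectrum is at issue.

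First I would bound the two extreme eigenvalues. Writing $P=\sum_{i=1}^{m}\lambda_{\max}(A_{i}^{*}A_{i})$ and $p=\sum_{i=1}^{m}\lambda_{\min}(A_{i}^{*}A_{i})$, I start from $\frac{1}{\lambda_{\max}(X)}I\le X^{-1}\le\frac{1}{\lambda_{\min}(X)}I$ and conjugate by $A_{i}$ to get $\frac{1}{\lambda_{\max}(X)}A_{i}^{*}A_{i}\le A_{i}^{*}X^{-1}A_{i}\le\frac{1}{\lambda_{\min}(X)}A_{i}^{*}A_{i}$. Combining these with the Weyl inequalities $\lambda_{\max}(M+N)\le\lambda_{\max}(M)+\lambda_{\max}(N)$ and $\lambda_{\min}(M+N)\ge\lambda_{\min}(M)+\lambda_{\min}(N)$, iterated over the sum and applied to the equation, yields
\[
\lambda_{\max}(X)\le\lambda_{\max}(Q)+\frac{P}{\lambda_{\min}(X)},\qquad
\lambda_{\min}(X)\ge\lambda_{\min}(Q)+\frac{p}{\lambda_{\max}(X)}.
\]
Since the right-hand side of the first inequality is decreasing in $\lambda_{\min}(X)$, I would substitute into it the lower bound for $\lambda_{\min}(X)$ from the second, obtaining $\lambda_{\max}(X)\le g(\lambda_{\max}(X))$, where $g$ is exactly the right-hand side of Eq.(\ref{eq2}); dually, substituting the upper bound for $\lambda_{\max}(X)$ into the second inequality gives $\lambda_{\min}(X)\ge f(\lambda_{\min}(X))$, with $f$ the right-hand side of Eq.(\ref{eq33}).

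The remaining and most delicate step is to convert these self-referential inequalities into the clean one-sided bounds. Each of $g,f$ has the form $c+\frac{sx}{rx+q}$ with positive coefficients, hence is increasing and concave on $(0,\infty)$ with a finite limit at infinity; clearing denominators turns the fixed-point equation into a quadratic whose two roots have negative product, so Eq.(\ref{eq2}) and Eq.(\ref{eq33}) each possess a unique positive root, namely $\alpha$ and $\beta$. Because $g$ is increasing with unique positive fixed point $\alpha$, the inequality $\lambda_{\max}(X)\le g(\lambda_{\max}(X))$ makes the iterates $\lambda_{\max}(X)\le g(\lambda_{\max}(X))\le g^{2}(\lambda_{\max}(X))\le\cdots$ an increasing bounded sequence converging to a fixed point, which must be $\alpha$, whence $\lambda_{\max}(X)\le\alpha$; symmetrically $\lambda_{\min}(X)\ge\beta$. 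The hard part here is purely bookkeeping, namely keeping straight which estimate is monotone in which variable so that each substitution sharpens rather than weakens the bound. Finally, $\beta=f(\beta)\ge\lambda_{\min}(Q)$ because the fractional term of $f$ is nonnegative, and $\beta\le\alpha$ follows at once from $\beta\le\lambda_{\min}(X)\le\lambda_{\max}(X)\le\alpha$ for the solution $X$ guaranteed by Theorem \ref{thm5}, which establishes Eq.(\ref{eq10}).
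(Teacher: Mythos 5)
Your proof is correct, and it resolves the crucial step by a genuinely different mechanism than the paper, although both start from the same estimates: your two substituted scalar inequalities $\lambda_{\max}(X)\le g(\lambda_{\max}(X))$ and $\lambda_{\min}(X)\ge f(\lambda_{\min}(X))$ are exactly the paper's inequality (\ref{eq4}), which it obtains by substituting Eq.(\ref{eq1}) into itself at the matrix level rather than by composing two Weyl-type scalar bounds. The divergence is in what is done next. The paper never studies $f$ and $g$ as maps; it builds the coupled sequences (\ref{eq3}) starting from $\beta_{0}=\lambda_{\min}(Q)$, proves by induction that $\{\alpha_{n}\}$ decreases and $\{\beta_{n}\}$ increases and both are bounded, defines $\alpha$ and $\beta$ as their limits (which then satisfy (\ref{eq2}) and (\ref{eq33})), and runs a second induction showing $\beta_{n}I\le X\le\alpha_{n}I$ for every $n$, so the bound survives in the limit. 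You instead decouple the two inequalities, prove via the quadratic obtained by clearing denominators that each of (\ref{eq2}) and (\ref{eq33}) has exactly one positive root, and iterate $g$ (resp.\ $f$) starting from $\lambda_{\max}(X)$ (resp.\ $\lambda_{\min}(X)$) itself, using monotone convergence to the unique positive fixed point. Your route buys something the paper leaves unaddressed: the theorem speaks of \emph{the} solutions of (\ref{eq2}) and (\ref{eq33}), and only your quadratic argument shows these equations determine $\alpha$ and $\beta$ uniquely, whereas the paper merely exhibits one root of each as a limit. The paper's route buys explicitly computable monotone sequences $\alpha_{n}\downarrow\alpha$, $\beta_{n}\uparrow\beta$, i.e.\ a practical scheme for evaluating the bounds without knowing $X$. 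One small repair to your wording: with your notation, the product of the roots of the quadratic is $-\lambda_{\max}(Q)\,p/\lambda_{\min}(Q)$ for (\ref{eq2}) and $-\lambda_{\min}(Q)\,P/\lambda_{\max}(Q)$ for (\ref{eq33}), which is zero rather than negative in the degenerate cases $p=0$ (all $A_{i}$ singular) and $P=0$ (all $A_{i}=0$); there the map in question is constant, so the unique positive fixed point is immediate and nothing breaks, but the claim of a strictly negative product should be qualified. Finally, note that your derivation of $\beta\le\alpha$ leans on the existence of a solution via Theorem \ref{thm5}; this is legitimate (that theorem precedes this one), and the paper's own conclusion $\beta I\le X\le\alpha I$ relies on existence in the same way.
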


\begin{proof}
{We define the sequences $\{\alpha_{n}\}$ and $\{\beta_{n}\}$ as follows:
\begin{equation}\label{eq3}                                                 
\beta_{0}=\lambda_{\min}(Q),\;\;\alpha_{n}=\lambda_{\max}(Q)+\frac{\sum\limits_{i=1}^{m}\lambda_{\max }(A_{i}^{*}A_{i})}{\beta_{n}},
\;\;\beta_{n+1}=\lambda_{\min}(Q)+\frac{\sum\limits_{i=1}^{m}\lambda_{\min }(A_{i}^{*}A_{i})}{\alpha_{n}},\;n=0,1,2,\cdots.
\end{equation}
From (\ref{eq3}), it follows that
\begin{eqnarray*}
\beta_{0}\leq\lambda_{\max}(Q)\leq \alpha_{0} &=& \lambda_{\max}(Q)+\frac{\sum\limits_{i=1}^{m}\lambda_{\max }(A_{i}^{*}A_{i})}{\lambda_{\min}(Q)},\\
 \lambda_{\min}(Q)=\beta_{0}\leq\beta_{1}&=&  \lambda_{\min}(Q)+\frac{\sum\limits_{i=1}^{m}\lambda_{\min}(A_{i}^{*}A_{i})}{\alpha_{0}}\leq \lambda_{\min}(Q)+\frac{\sum\limits_{i=1}^{m}\lambda_{\min }(A_{i}^{*}A_{i})}{\lambda_{\max}(Q)},\\
\beta_{0} \leq\lambda_{\max}(Q)\leq\alpha_{1} &=& \lambda_{\max}(Q)+\frac{\sum\limits_{i=1}^{m}\lambda_{\max}(A_{i}^{*}A_{i})}{\beta_{1}}
  \leq \lambda_{\max}(Q)+\frac{\sum\limits_{i=1}^{m}\lambda_{\max }(A_{i}^{*}A_{i})}{\lambda_{\min}(Q)}=\alpha_{0}.
  \end{eqnarray*}
  We suppose that
  $\lambda_{\max}(Q)\leq\alpha_{k}\leq\alpha_{k-1}$ and $\lambda_{\min}(Q)\leq\beta_{k-1}\leq\beta_{k}\leq \lambda_{\min}(Q)+\displaystyle\frac{\sum\limits_{i=1}^{m}\lambda_{\min }(A_{i}^{*}A_{i})}{\lambda_{\max}(Q)}.$ Then
  \begin{eqnarray*}
 \lambda_{\min}(Q)&\leq&\beta_{k}=\lambda_{\min}(Q)+
 \frac{\sum\limits_{i=1}^{m}\lambda_{\min}(A_{i}^{*}A_{i})}{\alpha_{k-1}}\\
  &\leq& \beta_{k+1}=\lambda_{\min}(Q)+\frac{\sum\limits_{i=1}^{m}\lambda_{\min}(A_{i}^{*}A_{i})}{\alpha_{k}} \leq \lambda_{\min}(Q)+\frac{\sum\limits_{i=1}^{m}\lambda_{\min }(A_{i}^{*}A_{i})}{\lambda_{\max}(Q)},\\
  \lambda_{\max}(Q)&\leq&\alpha_{k+1}= \lambda_{\max}(Q)+\frac{\sum\limits_{i=1}^{m}\lambda_{\max}(A_{i}^{*}A_{i})}{\beta_{k+1}}\\
   &\leq&\alpha_{k} =\lambda_{\max}(Q)+\frac{\sum\limits_{i=1}^{m}\lambda_{\max}(A_{i}^{*}A_{i})}{\beta_{k}}.
   \end{eqnarray*}
Hence, for each $k$ we have $\lambda_{\max}(Q)\leq \alpha_{k+1}\leq\alpha_{k}$ and $\lambda_{\min}(Q)\leq\beta_{k}\leq\beta_{k+1}\leq \lambda_{\min}(Q)+\frac{\sum\limits_{i=1}^{m}\lambda_{\min }(A_{i}^{*}A_{i})}{\lambda_{\max}(Q)},$ which imply that the sequences $\{\alpha_{n}\}$ and $\{\beta_{n}\}$ are monotonic and bounded. Therefore, they are convergent to
positive numbers.
Let $$\alpha=\lim_{n\rightarrow\infty}\alpha_{n},\;\;\beta=\lim_{n\rightarrow\infty}\beta_{n}.$$
Taking limits in (\ref{eq3}) yields

\begin{equation}\label{eq11}                                                            
  \alpha=\lambda_{\max}(Q)+\frac{\sum\limits_{i=1}^{m}\lambda_{\max}(A_{i}^{*}A_{i})}{\beta},\;\;
\beta=\lambda_{\min}(Q)+\frac{\sum\limits_{i=1}^{m}\lambda_{\min}(A_{i}^{*}A_{i})}{\alpha},
\end{equation}
which imply
$$\alpha=\lambda_{\max}(Q)+\frac{\sum\limits_{i=1}^{m}\lambda_{\max}(A_{i}^{*}A_{i})}
{\lambda_{\min}(Q)+\frac{\sum\limits_{i=1}^{m}\lambda_{\min}(A_{i}^{*}A_{i})}{\alpha}},\;\;\;
\beta=\lambda_{\min}(Q)+\frac{\sum\limits_{i=1}^{m}\lambda_{\min}(A_{i}^{*}A_{i})}
{\lambda_{\max}(Q)+\frac{\sum\limits_{i=1}^{m}\lambda_{\max}(A_{i}^{*}A_{i})}{\beta}}.$$
Therefore $\alpha$ and $\beta$ satisfy (\ref{eq2}) and (\ref{eq33}), respectively.
We will prove that $X\in [\beta I,\;\alpha I]$ for any positive definite solution $X.$
According to Theorem \ref{thm2} and the sequences in (\ref{eq3}), we have
$$\beta_{0}I\leq Q\leq X\leq(\lambda_{\max}(Q)+\displaystyle\frac{\sum\limits_{i=1}^{m}\lambda_{\max}(A_{i}^{*}A_{i})}
{\lambda_{\min}(Q)})I=\alpha_{0}I$$ for each  positive definite solution $X.$
From $X=Q+\sum\limits_{i=1}^{m}A_{i}^{*}X^{-1}A_{i},$ it follows that
$X=Q+\sum\limits_{i=1}^{m}A_{i}^{*}(Q+\sum\limits_{i=1}^{m}A_{i}^{*}X^{-1}A_{i})^{-1}A_{i}.$
Hence
\begin{equation}\label{eq4}                                                                   
\left(\lambda_{\min}(Q)+\frac{\sum\limits_{i=1}^{m}\lambda_{\min}(A_{i}^{*}A_{i})}
{\lambda_{\max}(Q)+\frac{\sum\limits_{i=1}^{m}\lambda_{\max}(A_{i}^{*}A_{i})}{\lambda_{\min}(X)}}\right)I\leq X\leq
\left(\lambda_{\max}(Q)+\frac{\sum\limits_{i=1}^{m}\lambda_{\max}(A_{i}^{*}A_{i})}
{\lambda_{\min}(Q)+\frac{\sum\limits_{i=1}^{m}\lambda_{\min}(A_{i}^{*}A_{i})}{\lambda_{\max}(X)}}\right)I.
\end{equation}
Using $\beta_{0}I\leq X\leq\alpha_{0}I,$ we obtain $\beta_{0}\leq\lambda_{\min}(X)$ and $\lambda_{\max}(X)\leq\alpha_{0}.$ Applying the inequality in (\ref{eq4}) yields $\beta_{1}I\leq X\leq\alpha_{1}I.$
By induction, it yields that $\beta_{n}I\leq X\leq\alpha_{n}I.$ Taking limits on both sides of the above inequality, we have $\beta I\leq X\leq\alpha I.$
}\end{proof}

\begin{Corollary}    \label{cor1}                                              
Every positive definite solution of Eq.(\ref{eq1}) is in $\left[Q+\displaystyle\frac{1}{\alpha}\sum\limits_{i=1}^{m}A_{i}^{*}A_{i},\;\;
Q+\displaystyle\frac{1}{\beta}\sum\limits_{i=1}^{m}A_{i}^{*}A_{i}\right],$ where $\alpha$ and $\beta$ are defined as in Theorem \ref{thm3}.
\end{Corollary}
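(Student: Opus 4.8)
The plan is to combine the two-sided scalar bound $\beta I \le X \le \alpha I$ furnished by Theorem \ref{thm3} with the defining identity $X = Q + \sum_{i=1}^{m} A_i^* X^{-1} A_i$, first converting the bounds on $X$ into bounds on $X^{-1}$ and then feeding those back through the equation. This reduces the whole statement to routine manipulation of the Loewner order, with no iteration or fixed-point argument required.

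First I would invoke Theorem \ref{thm3}: every positive definite solution $X$ satisfies $\beta I \le X \le \alpha I$. Since $\beta>0$ (indeed $\beta \ge \lambda_{\min}(Q)>0$ by \eqref{eq10}), both inequalities lie in the hypothesis range of Lemma \ref{lem1}, and inversion reverses them to give
$$\frac{1}{\alpha} I \le X^{-1} \le \frac{1}{\beta} I.$$
Next I would use the fact that \emph{congruence preserves the Loewner order}: if $M_1 \le M_2$ are Hermitian, then $B^* M_1 B \le B^* M_2 B$ for any $B$, because $B^*(M_2-M_1)B \ge 0$. Applying this with $B=A_i$ to the two bounds on $X^{-1}$ yields, for each $i$,
$$\frac{1}{\alpha} A_i^* A_i \le A_i^* X^{-1} A_i \le \frac{1}{\beta} A_i^* A_i.$$

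Summing over $i$ and adding $Q$ to all three members gives
$$Q + \frac{1}{\alpha}\sum_{i=1}^{m} A_i^* A_i \le Q + \sum_{i=1}^{m} A_i^* X^{-1} A_i \le Q + \frac{1}{\beta}\sum_{i=1}^{m} A_i^* A_i.$$
The middle term is exactly $X$ by Eq.\eqref{eq1}, so the chain collapses to the asserted membership
$$X \in \left[\, Q + \frac{1}{\alpha}\sum_{i=1}^{m} A_i^* A_i,\;\; Q + \frac{1}{\beta}\sum_{i=1}^{m} A_i^* A_i \,\right].$$

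The argument is essentially bookkeeping, so there is no genuine obstacle; the one step I would flag for care is the congruence-monotonicity claim $A_i^* M_1 A_i \le A_i^* M_2 A_i$, since the paper has not isolated it as a lemma. It is immediate from $A_i^*(M_2-M_1)A_i \ge 0$, but it is worth noting that the $A_i$ need not be invertible here, so the resulting matrix inequalities may be non-strict — which is consistent with the closed-interval form of the conclusion.
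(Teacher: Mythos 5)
Your proposal is correct and follows essentially the same route as the paper: both invoke Theorem \ref{thm3} to trap $X$ between $\beta I$ and $\alpha I$, convert this into the bound $\frac{1}{\alpha}I \le X^{-1} \le \frac{1}{\beta}I$, and substitute back into Eq.(\ref{eq1}) via congruence; the paper merely phrases the intermediate step through $\lambda_{\min}(X)$ and $\lambda_{\max}(X)$ rather than inverting the interval directly. The congruence-monotonicity step you flagged is used implicitly in the paper as well, so no substantive difference remains.
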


\begin{proof}{We suppose that $X$ is a positive definite solution of Eq.(\ref{eq1}). By Theorem \ref{thm3}, it follows that
 \begin{equation}\label{eq5}                                                     
 \lambda_{\min}(Q)\leq\beta\leq \lambda_{\min}(X),\;\;\lambda_{\max}(Q)\leq\lambda_{\max}(X)\leq\alpha.
\end{equation}
Using $X=Q+\sum\limits_{i=1}^{m}A_{i}^{*}X^{-1}A_{i}$, we obtain $Q+\displaystyle\frac{\sum\limits_{i=1}^{m}A_{i}^{*}A_{i}}{\lambda_{\max}(X)}\leq X\leq
Q+\displaystyle\frac{\sum\limits_{i=1}^{m}A_{i}^{*}A_{i}}{\lambda_{\min}(X)}$. Applying inequality (\ref{eq5})
yields $Q+\displaystyle\frac{1}{\alpha}\sum\limits_{i=1}^{m}A_{i}^{*}A_{i}\leq X\leq
Q+\displaystyle\frac{1}{\beta}\sum\limits_{i=1}^{m}A_{i}^{*}A_{i}.$}
\end{proof}

\begin{rem}                                                           

Applying (\ref{eq11}), we obtain
\begin{eqnarray*}
&&Q+\displaystyle\frac{1}{\beta}\sum\limits_{i=1}^{m}A_{i}^{*}A_{i}\leq \left(\lambda_{\max}(Q)+\frac{\sum\limits_{i=1}^{m}\lambda_{\max}(A_{i}^{*}A_{i})}{\beta}\right)I=\alpha I,\\
&&Q+\displaystyle\frac{1}{\alpha}\sum\limits_{i=1}^{m}A_{i}^{*}A_{i}\geq
\left(\lambda_{\min}(Q)+\frac{\sum\limits_{i=1}^{m}\lambda_{\min}(A_{i}^{*}A_{i})}{\alpha}\right)I=\beta I.
\end{eqnarray*}
That is to say, the estimate of positive definite solution in Corollary \ref{cor1} is more precise than that in Theorem \ref{thm3}.
\end{rem}

\section{Perturbation bounds }

Here we consider the perturbed equation
\begin{equation}\label{eq7}                                                                
\widetilde{X}-\sum\limits_{i=1}^{m}\widetilde{A_{i}}^*\widetilde{X}^{-1}\widetilde{A_{i}}=\widetilde{Q},
\end{equation}
where $\widetilde{A_{i}}$, $\widetilde{Q}$ are  small perturbations
of $A_{i}$ and $Q$ in Eq.(\ref{eq1}), respectively. We assume that $X$
and $\widetilde{X}$ are the solutions of Eq.(\ref{eq1}) and
Eq.(\ref{eq7}), respectively. Let $\Delta X=\widetilde{X}-X$, $\Delta Q=\widetilde{Q}-Q$ and $\Delta A_{i}=\widetilde{A_{i}}-A_{i}$.

In this section we develop three perturbation bounds for the
solution of Eq.(\ref{eq1}). To begin with, a relative perturbation bound for the unique solution $X$ of
Eq.(\ref{eq1}) is derived  . The perturbation bound in
Theorem \ref{thm6} does not need any knowledge of the actual solution $X$
of Eq.(\ref{eq1}). Secondly, based on the matrix differentiation, we use the techniques developed in \cite{s31}
to derive another perturbation bound in Theorem \ref{thm8}. Finally, based on the operator theory, we obtain a sharper perturbation bound in
Theorem \ref{thm9}.

The next theorem generalizes Theorem 3.2 in Li and Zhang \cite{s26}
with $m=1$ to arbitrary integer $m\geq 1$.

\begin{theorem}\label{thm6}                           
 Let $b=\beta^{2}+\beta \;\|\Delta Q\|-\sum\limits_{i=1}^{m}\|A_{i}\|^{2},
s=\sum\limits_{i=1}^{m}\|\Delta A_{i}\|\;(2\|A_{i}\|+\|\Delta A_{i}\|)$. If
\begin{equation}\label{eq8}                              
0<b<2\beta^{2} \;\;{and} \;\;b^{2}-4\beta^{2}\;(\beta\;\|\Delta Q\|+s)\geq 0,
 \end{equation}
then
\begin{equation}\label{eq9}                              
\frac{\|\widetilde{X}-X\|}{\|X\|}\,\leq
\varrho\,\sum\limits_{i=1}^{m}\|\Delta A_{i}\|+\omega \|\Delta Q\|\equiv \xi_{1},
\end{equation}
where
$$\varrho=\displaystyle\frac{2s}{\sum\limits_{i=1}^{m}\|\Delta A_{i}\|(b+\sqrt{b^{2}-4\beta^{2}\;(\beta\;\|\Delta Q\|+s)})},\;\;\;\;\omega=\frac{2\beta}{b+\sqrt{b^{2}-4\beta^{2}\;(\beta\;\|\Delta Q\|+s)}}.$$

\end{theorem}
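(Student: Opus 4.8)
The plan is to convert the matrix identity for $\Delta X=\widetilde X-X$ into a scalar quadratic inequality whose smaller root is exactly $\xi_1$. First I would subtract Eq.(\ref{eq1}) from the perturbed equation Eq.(\ref{eq7}) to get
\[
\Delta X=\Delta Q+\sum_{i=1}^{m}\left(\widetilde A_i^{*}\widetilde X^{-1}\widetilde A_i-A_i^{*}X^{-1}A_i\right).
\]
Writing $\widetilde A_i=A_i+\Delta A_i$ and using the resolvent identity $\widetilde X^{-1}-X^{-1}=-\widetilde X^{-1}\Delta X\,X^{-1}$, each summand splits into an inverse-difference term $A_i^{*}(\widetilde X^{-1}-X^{-1})A_i=-A_i^{*}\widetilde X^{-1}\Delta X\,X^{-1}A_i$ plus the three cross terms $A_i^{*}\widetilde X^{-1}\Delta A_i$, $\Delta A_i^{*}\widetilde X^{-1}A_i$, $\Delta A_i^{*}\widetilde X^{-1}\Delta A_i$. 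Taking spectral norms and using submultiplicativity, the cross terms collect into $\|\widetilde X^{-1}\|\sum_i\|\Delta A_i\|(2\|A_i\|+\|\Delta A_i\|)=\|\widetilde X^{-1}\|\,s$, while the inverse-difference term is bounded by $\|\widetilde X^{-1}\|\,\|X^{-1}\|\,\|\Delta X\|\sum_i\|A_i\|^{2}$.

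Next I would insert the spectral bounds. By Theorem \ref{thm3} we have $X\ge\beta I$, hence $\|X^{-1}\|\le 1/\beta$ and $\|X\|\ge\beta$. For the perturbed solution I would use $\widetilde X=X+\Delta X\ge(\beta-\|\Delta X\|)I$, so that $\|\widetilde X^{-1}\|\le 1/(\beta-\|\Delta X\|)$ whenever $\|\Delta X\|<\beta$. Writing $r=\|\Delta X\|$ and substituting gives
\[
r\le \|\Delta Q\|+\frac{r\sum_i\|A_i\|^{2}}{\beta(\beta-r)}+\frac{s}{\beta-r}.
\]
Clearing the positive factor $\beta(\beta-r)$ and collecting powers of $r$ turns this into $r^{2}-\frac{b}{\beta}r+(\beta\|\Delta Q\|+s)\ge 0$, with $b=\beta^{2}+\beta\|\Delta Q\|-\sum_i\|A_i\|^{2}$ exactly as stated. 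The assumption $b^{2}-4\beta^{2}(\beta\|\Delta Q\|+s)\ge 0$ in (\ref{eq8}) is precisely the condition that this quadratic has real roots $r_{\pm}=\bigl(b\pm\sqrt{b^{2}-4\beta^{2}(\beta\|\Delta Q\|+s)}\bigr)/(2\beta)$.

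Finally I would select the correct branch and simplify. The quadratic inequality forces $r\le r_-$ or $r\ge r_+$, and the condition $0<b<2\beta^{2}$ guarantees $r_-\le b/(2\beta)<\beta$, which is consistent with the standing requirement $\|\Delta X\|<\beta$. Taking $r\le r_-$, dividing by $\|X\|\ge\beta$ and rationalizing,
\[
\frac{\|\Delta X\|}{\|X\|}\le\frac{r_-}{\beta}=\frac{b-\sqrt{b^{2}-4\beta^{2}(\beta\|\Delta Q\|+s)}}{2\beta^{2}}=\frac{2(\beta\|\Delta Q\|+s)}{b+\sqrt{b^{2}-4\beta^{2}(\beta\|\Delta Q\|+s)}},
\]
and splitting the numerator into its $\|\Delta Q\|$ and $\sum_i\|\Delta A_i\|$ parts reproduces $\varrho\sum_i\|\Delta A_i\|+\omega\|\Delta Q\|=\xi_1$.

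I expect the delicate point to be the justification that $r$ lies on the lower branch $r\le r_-$ rather than $r\ge r_+$ (both roots in fact lie below $\beta$), together with the mild circularity of assuming $\|\Delta X\|<\beta$ before the bound on $\|\widetilde X^{-1}\|$ is in hand. I would resolve this by continuity: parametrizing the data as $A_i+t\Delta A_i,\ Q+t\Delta Q$ for $t\in[0,1]$ and invoking the uniqueness and continuous dependence of the solution from Theorem \ref{thm5}, the error $r(t)$ is continuous with $r(0)=0$, so it cannot cross the open gap $(r_-,r_+)$ on which the quadratic is strictly negative; hence $r$ stays on the branch containing the origin, giving $\|\Delta X\|\le r_-$.
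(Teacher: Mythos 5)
Your proposal follows a genuinely different route from the paper. The paper never forms a scalar inequality for $\|\Delta X\|$: it fixes the candidate bound $\xi_1$ in advance, defines the closed convex set $\Omega=\{\Delta X\in\mathcal{H}^{n\times n}:\|X^{-1/2}\Delta X X^{-1/2}\|\leq\xi_1\}$ and the continuous map $f(\Delta X)=\sum_{i=1}^{m}\bigl(\widetilde A_i^{*}(X+\Delta X)^{-1}\widetilde A_i-A_i^{*}X^{-1}A_i\bigr)+\Delta Q$, proves $f(\Omega)\subseteq\Omega$ with the help of Lemma \ref{lem2} (this is exactly where hypothesis (\ref{eq8}) enters, via $\xi_1\leq b/(2\beta^2)<1$), and then applies Brouwer's fixed point theorem; uniqueness from Theorem \ref{thm5} identifies the fixed point with $\widetilde X-X$. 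The payoff of that route is that your ``delicate point'' never arises: the fixed point Brouwer produces lies in $\Omega$ by construction, so it is automatically on the small branch, and no continuity-in-$t$ or root-selection argument is needed. Your route buys something else: it is a direct a priori estimate on the actual error, avoiding Lemma \ref{lem2} and the weighted norm altogether, and your algebra is correct — the quadratic $r^{2}-(b/\beta)r+(\beta\|\Delta Q\|+s)\geq 0$, the root formula, the bound $r_-\leq b/(2\beta)<\beta$, and the identity $r_-/\beta=\xi_1$ all check out.

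The branch selection, however, is not yet a proof, and the gaps are concrete. First, Theorem \ref{thm5} asserts existence, uniqueness and convergence of the iteration only; continuous dependence of $X(t)$ on $t$ is stated nowhere in the paper, so you cannot simply invoke it. It is true, and provable — e.g.\ by compactness: Theorem \ref{thm2} bounds $X(t)$ uniformly above and below, so any limit point of $X(t_n)$ solves the limiting equation and by uniqueness equals $X(t_0)$ — but this argument must be supplied. Second, the inequality satisfied by $r(t)$ for $t<1$ is the one built from the data $t\Delta Q,\,t\Delta A_i$, so its forbidden gap is the $t$-dependent interval between the roots of the $t$-quadratic, not your fixed $(r_-,r_+)$; to keep a single fixed gap you need the (easy, but necessary) monotonicity remark that $t\|\Delta Q\|\leq\|\Delta Q\|$ and $s(t)=\sum_i t\|\Delta A_i\|(2\|A_i\|+t\|\Delta A_i\|)\leq s$ imply that $r(t)$ also satisfies the full $t=1$ scalar inequality, hence $r(t)\notin(r_-,r_+)$ whenever $r(t)<\beta$. (Your parenthetical claim that both roots lie below $\beta$ is also unproved, though it holds: $r_+\leq\beta$ reduces after squaring to $\sum_i\|A_i\|^{2}+s\geq 0$; alternatively, the argument only needs the forbidden interval $(r_-,\min(r_+,\beta))$ to be nonempty, which $0<b<2\beta^{2}$ already gives.) Third, when the discriminant in (\ref{eq8}) vanishes, $r_-=r_+$, the gap is empty, and the crossing argument is vacuous; this degenerate case needs a separate limiting step, e.g.\ proving the bound for the scaled perturbations $t<1$ (whose discriminant is strictly positive) and letting $t\to 1$. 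With these patches your argument becomes a complete and correct alternative to the paper's fixed-point proof.
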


\begin{proof}  Let
$$\Omega=\{\Delta X \in\mathcal{H}^{n\times n}:\,\,\|
X^{-1/2}\Delta X X^{-1/2} \|\leq \varrho\,\sum\limits_{i=1}^{m}\|\Delta
A_{i}\|+\omega\|\Delta Q\|\;\}.$$ Obviously, $\Omega$ is a nonempty
bounded convex closed set. Let $$f(\Delta
X)=\sum\limits_{i=1}^{m}(\widetilde{A_{i}}^*(X+\Delta
X)^{-1}\widetilde{A_{i}}-A_{i}^*X^{-1}A_{i})+\Delta Q,\;\;\Delta
X\in\Omega.$$ Evidently, $f: \Omega\mapsto\mathcal{H}^{n\times n}$
is continuous. We will prove that $f(\Omega)\subseteq\Omega$.

For every $\Delta X\in\Omega,$ that is  $\|X^{-1/2}\Delta XX^{-1/2}\|\leq
\varrho\,\sum\limits_{i=1}^{m}\|\Delta A_{i}\|+\omega \|\Delta Q\|.$ Thus
$$X^{-1/2}\Delta XX^{-1/2}\geq (-\varrho \,\sum\limits_{i=1}^{m}\|\Delta A_{i}\|-\omega\|\Delta Q\| )I,$$
$$X+\Delta X\geq (1-\varrho \sum\limits_{i=1}^{m}\|\Delta A_{i}\|-\omega\Delta Q\|)X.$$
According to (\ref{eq8}) and (\ref{eq9}), we have $$\varrho\sum\limits_{i=1}^{m}\|\Delta A_{i}\|+\omega\|\Delta Q\|=\frac{2(\beta\;\|\Delta Q\|+s)}{b+\sqrt{b^{2}-4\beta^{2}\;(\beta\;\|\Delta Q\|+s)}}\leq \frac{2(\beta\;\|\Delta Q\|+s)}{b}\leq\frac{b}{2\beta^{2}}<1.$$ Therefore $$(1-\varrho\sum\limits_{i=1}^{m}\|\Delta A_{i}\|-\omega\|\Delta Q\|)X>0.$$
From Lemma \ref{lem2} and $X\geq\beta I$, it
follows that
\begin{eqnarray*}\label{eq16}                                                         
&&\left \| X^{-\frac{1}{2}}\left[\sum\limits_{i=1}^{m}A_{i}^*\left((X+\Delta
X\right)^{-1}-X^{-1})A_{i}\right]X^{-\frac{1}{2}}\right\|\nonumber \\
&\leq &\left(\| X^{-\frac{1}{2}}\Delta X X^{-\frac{1}{2}}\| +
\frac{\|X^{-\frac{1}{2}}\Delta X
X^{-\frac{1}{2}}\|^{2}}{1-\varrho\;\sum\limits_{i=1}^{m}\|\Delta
A_{i}\|-\omega\|\Delta Q\|}\right)\left(\sum\limits_{i=1}^{m}\|X^{-\frac{1}{2}}A_{i}X^{-\frac{1}{2}}\|^{2}\right)\\
&\leq &\left(\| X^{-\frac{1}{2}}\Delta X X^{-\frac{1}{2}}\| +
\frac{\|X^{-\frac{1}{2}}\Delta X
X^{-\frac{1}{2}}\|^{2}}{1-\varrho\;\sum\limits_{i=1}^{m}\|\Delta
A_{i}\|-\omega\|\Delta Q\|}\right)\left(\frac{1}{\beta^{2}}\sum\limits_{i=1}^{m}\|A_{i}\|^{2}\right).\nonumber
\end{eqnarray*}
Therefore
\begin{eqnarray*}
&&\left\| X^{-\frac{1}{2}}f(\Delta
X)X^{-\frac{1}{2}}\right\|\\
&=&\left\| X^{-\frac{1}{2}}\left[\sum\limits_{i=1}^{m}\widetilde{A_{i}}^*\left((X+\Delta
X)^{-1}-X^{-1}\right)\widetilde{A_{i}}\right]X^{-\frac{1}{2}}+X^{-\frac{1}{2}}\Delta QX^{-\frac{1}{2}}\right\|\\
&\leq &\left\| \sum\limits_{i=1}^{m}X^{-\frac{1}{2}}A_{i}^*((X+\Delta
X)^{-1}-X^{-1})A_{i}X^{-\frac{1}{2}}\right\|+\|X^{-\frac{1}{2}}\Delta QX^{-\frac{1}{2}}\|\\
&&+\left\|\sum\limits_{i=1}^{m}X^{-\frac{1}{2}}\left[\Delta A_{i}^{*}(X+\Delta X)^{-1}(A_{i}+\Delta A_{i})+A_{i}^{*}(X+\Delta X)^{-1}\Delta A_{i}\right]X^{-\frac{1}{2}}\right\|
\\
&\leq & \left(\| X^{-\frac{1}{2}}\Delta X
X^{-\frac{1}{2}}\| +
\frac{\|X^{-\frac{1}{2}}\Delta
X X^{-\frac{1}{2}}\|^{2}}{1-\varrho\sum\limits_{i=1}^{m}\|\Delta
A_{i}\|-\omega\|\Delta Q\|}\right)\left(\frac{1}{\beta^{2}}\sum\limits_{i=1}^{m}\|A_{i}\|^{2}\right)\\
&&+\frac{\sum\limits_{i=1}^{m}\|\Delta A_{i}\|(2\| A_{i}\|+\|\Delta
A_{i}\|)}{\beta^{\;2}(1-\varrho\,\sum\limits_{i=1}^{m}\|\Delta A_{i}\|-\omega\|\Delta Q\|)}+\frac{\|\Delta Q\|}{\beta}\\
&\leq & \left(\xi_{1}+\frac{\xi_{1}^{2}}{1-\xi_{1}}\right)\left(\frac{1}{\beta^{2}}
\sum\limits_{i=1}^{m}\|A_{i}\|^{2}\right)+
\frac{s}{\beta^{2}(1-\xi_{1})}+\frac{\|\Delta Q\|}{\beta}\\
&=&\xi_{1}.
\end{eqnarray*}
That is $f(\Omega)\subseteq\Omega.$ By Brouwer fixed point theorem, there exists a $\Delta
X\in\Omega$ such that $f(\Delta X)=\Delta X$. Moreover, by Theorem
\ref{thm5}, we know that $X$ and $\widetilde{X}$ are the unique
solutions to Eq.(\ref{eq1}) and Eq.(\ref{eq7}), respectively. Then
\begin{eqnarray*}
\frac{\|\widetilde{X}-X\|}{\|X\|} & = & \frac{\|\Delta
X\|}{\|X\|}=\frac{\| X^{1/2}(X^{-1/2}\Delta X
X^{-1/2})X^{1/2}\|}{\|X\|}\\
& \leq &\|X^{-1/2}\Delta X X^{-1/2}\|\leq \varrho\sum\limits_{i=1}^{m}\|\Delta
A_{i}\|+\omega\|\Delta Q\|.
\end{eqnarray*}
\end{proof}
\begin{rem}
With $$\varrho\sum\limits_{i=1}^{m}\|\Delta
A_{i}\|+\omega\|\Delta Q\|=\frac{2(\sum\limits_{i=1}^{m}\|\Delta A_{i}\|(2\|A_{i}\|+\|\Delta A_{i}\|)+\beta\|\Delta Q\|)}{b+\sqrt{b^{2}-4\beta^{2}\;(\beta\;\|\Delta Q\|+s)}},$$ we get
$\varrho\sum\limits_{i=1}^{m}\|\Delta
A_{i}\|+\omega\|\Delta Q\|\rightarrow 0$ \;as \;$\Delta Q\rightarrow 0$ and  $\|\Delta A_{i}\|\rightarrow 0\;(i=1,2, \cdots, m).$ Therefore Eq.(\ref{eq1}) is well-posed.
\end{rem}

Next, with the help of the following lemma, we shall derive a new perturbation bound as shown in Theorem \ref{thm8}.

\begin{lemma}\label{lem5}                                                   
Suppose that $X$ is a unique positive definite solution of Eq.(\ref{eq1}). If
\begin{equation}\label{eq12}                                                 
\sum\limits_{i=1}^{m}\|A_{i}\|^{2}<\beta^{2},
\end{equation}
 then $$\|\textmd{d}X\|\leq \frac{2\beta\sum\limits_{i=1}^{m}(\|A_{i}\|\|dA_{i}\|)}{\beta^{2}-\sum\limits_{i=1}^{m}\|A_{i}\|^{2}}.$$
\end{lemma}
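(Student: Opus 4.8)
The plan is to differentiate the defining equation and then estimate the spectral norm of the resulting linear relation in $\mathrm{d}X$. Since the statement involves only the $\mathrm{d}A_i$, I treat $Q$ as a constant matrix, so $\mathrm{d}Q=0$ by property (6) of Lemma~\ref{lem4}. Applying properties (3)--(4) to $X-\sum_{i=1}^{m}A_i^*X^{-1}A_i=Q$ gives
\begin{equation*}
\mathrm{d}X-\sum_{i=1}^{m}\left[(\mathrm{d}A_i^*)X^{-1}A_i+A_i^*(\mathrm{d}X^{-1})A_i+A_i^*X^{-1}(\mathrm{d}A_i)\right]=0.
\end{equation*}
Using property (5), $\mathrm{d}X^{-1}=-X^{-1}(\mathrm{d}X)X^{-1}$, and moving every term containing $\mathrm{d}X$ to the left, this rearranges to
\begin{equation*}
\mathrm{d}X+\sum_{i=1}^{m}A_i^*X^{-1}(\mathrm{d}X)X^{-1}A_i=\sum_{i=1}^{m}\left[(\mathrm{d}A_i^*)X^{-1}A_i+A_i^*X^{-1}(\mathrm{d}A_i)\right].
\end{equation*}

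Next I would take spectral norms of both sides. For the right-hand side, submultiplicativity together with $\|\mathrm{d}A_i^*\|=\|\mathrm{d}A_i\|$ and $\|A_i^*\|=\|A_i\|$ yields the bound $2\,\|X^{-1}\|\sum_{i=1}^{m}\|A_i\|\,\|\mathrm{d}A_i\|$. Since $X$ is the unique solution, Theorem~\ref{thm3} gives $X\ge\beta I$ and hence $\|X^{-1}\|\le 1/\beta$, so the right-hand side is at most $\frac{2}{\beta}\sum_{i=1}^{m}\|A_i\|\,\|\mathrm{d}A_i\|$.

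The key step is to bound the left-hand side from below by a positive multiple of $\|\mathrm{d}X\|$. By the reverse triangle inequality and submultiplicativity,
\begin{equation*}
\left\|\mathrm{d}X+\sum_{i=1}^{m}A_i^*X^{-1}(\mathrm{d}X)X^{-1}A_i\right\|\ge\|\mathrm{d}X\|-\|X^{-1}\|^{2}\sum_{i=1}^{m}\|A_i\|^2\,\|\mathrm{d}X\|\ge\left(1-\frac{\sum_{i=1}^{m}\|A_i\|^2}{\beta^2}\right)\|\mathrm{d}X\|.
\end{equation*}
This is exactly where hypothesis (\ref{eq12}) enters: $\sum_{i=1}^{m}\|A_i\|^2<\beta^2$ guarantees the coefficient $1-\beta^{-2}\sum_{i=1}^{m}\|A_i\|^2$ is strictly positive, so the inequality may be divided through. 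Combining the lower bound on the left with the upper bound on the right and solving for $\|\mathrm{d}X\|$ produces
\begin{equation*}
\|\mathrm{d}X\|\le\frac{2\beta\sum_{i=1}^{m}\|A_i\|\,\|\mathrm{d}A_i\|}{\beta^2-\sum_{i=1}^{m}\|A_i\|^2},
\end{equation*}
which is the claimed estimate. The only delicate point is the reverse triangle inequality furnishing the lower bound on the left-hand operator, since this is what forces the condition (\ref{eq12}); the remaining manipulations are routine norm bookkeeping.
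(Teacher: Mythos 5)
Your proposal is correct and follows essentially the same route as the paper's own proof: differentiate Eq.~(\ref{eq1}) via Lemma~\ref{lem4}, rearrange into $\mathrm{d}X+\sum_{i=1}^{m}(A_i^*X^{-1})\,\mathrm{d}X\,(X^{-1}A_i)=\sum_{i=1}^{m}\bigl[(\mathrm{d}A_i^*)X^{-1}A_i+A_i^*X^{-1}\mathrm{d}A_i\bigr]$, bound the right side above by $\frac{2}{\beta}\sum_{i=1}^{m}\|A_i\|\,\|\mathrm{d}A_i\|$ using $\|X^{-1}\|\le 1/\beta$ from Theorem~\ref{thm3}, and bound the left side below by $\bigl(1-\beta^{-2}\sum_{i=1}^{m}\|A_i\|^2\bigr)\|\mathrm{d}X\|$ via the reverse triangle inequality, with condition (\ref{eq12}) guaranteeing positivity of that coefficient. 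No gaps; the two arguments match step for step.
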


\begin{proof}{According to Lemma {\ref{lem4}}, differentiating on both sides of Eq.(\ref{eq1}), we have
$$dX-\sum\limits_{i=1}^{m}[dA_{i}^{*}(X^{-1}A_{i})-(A_{i}^{*}X^{-1})dX(X^{-1}A_{i})+(A_{i}^{*}X^{-1})dA_{i}]=0.$$
Therefore, $$dX+\sum\limits_{i=1}^{m}(A_{i}^{*}X^{-1})dX(X^{-1}A_{i})=\sum\limits_{i=1}^{m}dA_{i}^{*}(X^{-1}A_{i})+
\sum\limits_{i=1}^{m}(A_{i}^{*}X^{-1})dA_{i}$$
and
\begin{eqnarray*}
\|dX+\sum\limits_{i=1}^{m}(A_{i}^{*}X^{-1})dX(X^{-1}A_{i})\|&=& \|\sum\limits_{i=1}^{m}dA_{i}^{*}(X^{-1}A_{i})+
\sum\limits_{i=1}^{m}(A_{i}^{*}X^{-1})dA_{i} \|\\
&\leq &  \sum\limits_{i=1}^{m}\|dA_{i}^{*}\|\|X^{-1}\|\|A_{i}\|+
\sum\limits_{i=1}^{m}\|A_{i}^{*}\|\|X^{-1}\|\|dA_{i}\|\\
&=&  2\sum\limits_{i=1}^{m}\|A_{i}\|\|X^{-1}\|\|dA_{i}\|
\end{eqnarray*}
are true. By Theorem \ref{thm3}, it follows that $\|X^{-1}\|\leq \frac{1}{\beta}.$
Then
\begin{equation}\label{eq14}                                                  
\|dX+\sum\limits_{i=1}^{m}(A_{i}^{*}X^{-1})dX(X^{-1}A_{i})\|\leq \frac{2}{\beta}\sum\limits_{i=1}^{m}\|A_{i}\|\|dA_{i}\|.
\end{equation}
In addition,
\begin{eqnarray}\label{eq15}                                                   
 &&\|d X+\sum\limits_{i=1}^{m}(A_{i}^{*}X^{-1})d X(X^{-1}A_{i})\|
\geq  \|dX\|-\|\sum\limits_{i=1}^{m}(A_{i}^{*}X^{-1})d X(X^{-1}A_{i})\|  \nonumber\\
&\geq& \|dX\|- \sum\limits_{i=1}^{m}\|(A_{i}^{*}X^{-1})d X(X^{-1}A_{i})\|
 \geq \|dX\|-\frac{1}{\beta^{2}}\left(\sum\limits_{i=1}^{m}\|A_{i}\|^{2}\right)\|dX\|\nonumber\\
&=&\left(1-\frac{1}{\beta^{2}}\sum\limits_{i=1}^{m}\|A_{i}\|^{2}\right)\|dX\|.
\end{eqnarray}
By (\ref{eq12}), it follows that $(1-\frac{1}{\beta^{2}}\sum\limits_{i=1}^{m}\|A_{i}\|^{2})\|dX\|>0.$

Combining (\ref{eq14}) and (\ref{eq15}), we obtain
$$(1-\frac{1}{\beta^{2}}\sum\limits_{i=1}^{m}\|A_{i}\|^{2})\|dX\|\leq
\frac{2}{\beta}\sum\limits_{i=1}^{m}\left(\|A_{i}\|\|dA_{i}\|\right),$$ which means that
$$\|dX\|\leq \frac{2\beta\sum\limits_{i=1}^{m}(\|A_{i}\|\|dA_{i}\|)}{\beta^{2}-\sum\limits_{i=1}^{m}\|A_{i}\|^{2}}.$$
}\end{proof}

\begin{theorem}\label{thm8}                                                          
Suppose that $X$, $\widetilde{X}$ are the unique positive definite solutions of Eq.(\ref{eq1}) and Eq. (\ref{eq7}), respectively.
If
\begin{equation}\label{eq17}                                                            
\sum\limits_{i=1}^{m}\|A_{i}\|^{2}<\beta^{2}\;\;\mbox{and}\;\;\sum\limits_{i=1}^{m}(\|A_{i}\|+\|\Delta A_{i}\|)^{2}<\beta^{2},
\end{equation}
then $$\|\widetilde{X}-X\|\leq \frac{2\beta\;\sum\limits_{i=1}^{m}(\|A_{i}\|+\|\Delta A_{i}\|)\|\Delta A_{i}\|}
{\beta^{2}-\sum\limits_{i=1}^{m}(\|A_{i}\|+\|\Delta A_{i}\|)^{2}}$$
and $$\frac{\|\widetilde{X}-X\|}{\|X\|}\leq \frac{2\beta\;\sum\limits_{i=1}^{m}(\|A_{i}\|+\|\Delta A_{i}\|)\|\Delta A_{i}\|}
{(\beta^{2}-\sum\limits_{i=1}^{m}(\|A_{i}\|+\|\Delta A_{i}\|)^{2})\|X\|}\equiv\xi_{2}$$
hold true.
\begin{proof}
{Set $A_{i}(t)=A_{i}+t\Delta A_{i},$ $t\in [0,1].$ By Theorem \ref{thm5}, we have that for arbitrary $t\in [0,1],$
the matrix equation $$X-\sum\limits_{i=1}^{m}A_{i}^{*}(t)X^{-1}A_{i}(t)=Q $$
has a unique positive definite solution $X(t)$ satisfying
$$X(0)=X,\;\;\;X(1)=\widetilde{X}.$$
By Lemma \ref{lem5}  , we have
\begin{eqnarray*}
&&\|\widetilde{X}-X\|=\|X(1)-X(0)\|=\|\int_{0}^{1}d X(t)\|\leq \int_{0}^{1}\|d X(t)\|\\
&\leq& \int_{0}^{1}\frac{2\beta\sum\limits_{i=1}^{m}(\|A_{i}(t)\|\|dA_{i}(t)\|)}
{\beta^{2}-\sum\limits_{i=1}^{m}(\|A_{i}\|+t\|\Delta A_{i}\|)^{2}}
\leq  \int_{0}^{1}\frac{2\beta \sum\limits_{i=1}^{m}(\|A_{i}\|+t\|\Delta A_{i}\|)\|\Delta A_{i}\|}
{\beta^{2}-\sum\limits_{i=1}^{m}(\|A_{i}\|+t\|\Delta A_{i}\|)^{2}}dt.
\end{eqnarray*}
By mean value theorem of integration, there exists $\varepsilon\in[0,1]$ satisfying
\begin{eqnarray*}
\|\widetilde{X}-X\|&\leq& \int_{0}^{1}\frac{2\beta \sum\limits_{i=1}^{m}(\|A_{i}\|+t\|\Delta A_{i}\|)\|\Delta A_{i}\|}
{\beta^{2}-\sum\limits_{i=1}^{m}(\|A_{i}\|+t\|\Delta A_{i}\|)^{2}}dt
=\frac{\sum\limits_{i=1}^{m}2\beta(\|A_{i}\|+\varepsilon\|\Delta A_{i}\|)\|\Delta A_{i}\|}
{\beta^{2}-\sum\limits_{i=1}^{m}(\|A_{i}\|+\varepsilon\|\Delta A_{i}\|)^{2}}\\
&\leq& \frac{\sum\limits_{i=1}^{m}2\beta(\|A_{i}\|+\|\Delta A_{i}\|)\|\Delta A_{i}\|}
{\beta^{2}-\sum\limits_{i=1}^{m}(\|A_{i}\|+\|\Delta A_{i}\|)^{2}}.
 \end{eqnarray*}
}
\end{proof}
\end{theorem}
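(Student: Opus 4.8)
The plan is to connect $X$ and $\widetilde{X}$ by a continuous path of solutions and then integrate the infinitesimal bound furnished by Lemma \ref{lem5}. I would set $A_i(t) = A_i + t\,\Delta A_i$ for $t\in[0,1]$ and consider the family of equations
\[
X(t) - \sum_{i=1}^{m} A_i^*(t)\,X(t)^{-1} A_i(t) = Q .
\]
By Theorem \ref{thm5}, each member of this family (for fixed $t$) has a unique positive definite solution $X(t)$; since $A_i(0)=A_i$ and $A_i(1)=\widetilde{A_i}$, the endpoints are identified as $X(0)=X$ and $X(1)=\widetilde{X}$. The whole problem then reduces to estimating $\|X(1)-X(0)\| = \big\|\int_0^1 \mathrm{d}X(t)\big\| \le \int_0^1 \|\mathrm{d}X(t)\|$.

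Next I would bound the integrand via Lemma \ref{lem5} applied to the $t$-equation. First I would check that its hypothesis holds uniformly in $t$: from $\|A_i(t)\| \le \|A_i\| + t\|\Delta A_i\| \le \|A_i\| + \|\Delta A_i\|$ and the second inequality in (\ref{eq17}), we get $\sum_i \|A_i(t)\|^2 \le \sum_i (\|A_i\|+\|\Delta A_i\|)^2 < \beta^2$ for every $t\in[0,1]$. With $\mathrm{d}A_i(t) = \Delta A_i\,\mathrm{d}t$, Lemma \ref{lem5} then bounds $\|\mathrm{d}X(t)\|$; enlarging the numerator by replacing $\|A_i(t)\|$ with $\|A_i\|+t\|\Delta A_i\|$ and correspondingly shrinking the (still positive) denominator, since $\sum_i\|A_i(t)\|^2 \le \sum_i(\|A_i\|+t\|\Delta A_i\|)^2$, yields the cleaner integrand
\[
\|\mathrm{d}X(t)\| \le \frac{2\beta \sum_{i=1}^{m}(\|A_i\|+t\|\Delta A_i\|)\|\Delta A_i\|}{\beta^2 - \sum_{i=1}^{m}(\|A_i\|+t\|\Delta A_i\|)^2}\,\mathrm{d}t .
\]

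Finally I would integrate over $[0,1]$. The integrand is continuous on $[0,1]$ because the uniform hypothesis keeps the denominator bounded away from $0$, so the mean value theorem for integrals supplies some $\varepsilon\in[0,1]$ at which the integral equals the value of the integrand; since the numerator increases and the denominator decreases in the path parameter, replacing $\varepsilon$ by $1$ only enlarges the expression and delivers the asserted absolute estimate, and dividing through by $\|X\|$ gives the relative bound $\xi_2$. The step I expect to be the main obstacle is the foundational one underlying the integral representation: establishing that $t\mapsto X(t)$ is genuinely differentiable (through an implicit-function or continuity argument for the solution of the $t$-equation) so that $\mathrm{d}X(t)$ is meaningful and $\|X(1)-X(0)\| \le \int_0^1 \|\mathrm{d}X(t)\|$ is valid, together with the need to invoke Lemma \ref{lem5} with a single fixed $\beta$ uniformly along the homotopy rather than the $t$-dependent lower bound that Theorem \ref{thm3} would a priori provide for each member of the family. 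Once these regularity and uniformity issues are secured, the remaining estimates are routine.
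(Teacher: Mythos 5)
Your proposal is essentially identical to the paper's own proof: the same homotopy $A_i(t)=A_i+t\,\Delta A_i$, the same appeal to Theorem \ref{thm5} for a unique solution $X(t)$ with $X(0)=X$, $X(1)=\widetilde{X}$, the same application of Lemma \ref{lem5} to bound $\|\mathrm{d}X(t)\|$, and the same integration followed by the mean value theorem for integrals and monotonicity in $t$. The two issues you flag as obstacles (differentiability of $t\mapsto X(t)$, and invoking Lemma \ref{lem5} along the path with the single fixed $\beta$ of the unperturbed equation rather than a $t$-dependent one) are likewise left unaddressed in the paper's proof, so your argument matches it in both structure and level of rigor.
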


Next, based on the operator theory, we derive a sharper perturbation estimate.

Subtracting (\ref{eq1}) from (\ref{eq7}) we have
\begin{equation}\label{eq18}                                 
  \Delta X+\sum\limits_{i=1}^{m}B_{i}^{*}\Delta X B_{i}=E+h(\Delta X),
\end{equation}
where
\begin{eqnarray*}
&& B_{i}=X^{-1}A_{i}, \\
&& E=\sum\limits_{i=1}^{m}(B_{i}^{*}\Delta A_{i}+\Delta A_{i}^{*}B_{i})
+\sum\limits_{i=1}^{m}\Delta A_{i}^{*}X^{-1}\Delta A_{i}+\Delta Q, \\
&&h(\Delta X)=\sum\limits_{i=1}^{m}B_{i}^{*}\Delta X X^{-1} \Delta X(I+X^{-1}\Delta X)^{-1}B_{i}  -\sum\limits_{i=1}^{m}\widetilde{A}_{i}^{*}X^{-1}\Delta X(I+X^{-1}\Delta X)^{-1}X^{-1}\Delta A_{i}\\
&& \;\;\;\;\;\;\;\;\;\;-\sum\limits_{i=1}^{m}\Delta A_{i}^{*}X^{-1}\Delta X(I+X^{-1}\Delta X)^{-1} B_{i}.
\end{eqnarray*}

We define the linear operator \textbf{L}: $\mathcal{H}^{n\times n}\rightarrow\mathcal{H}^{n\times n}$ by

\begin{equation*}
\textbf{L}W=W+\sum\limits_{i=1}^{m}B_{i}^{*}WB_{i},\;\;W\in\mathcal{H}^{n\times n}.
\end{equation*}
Since $$X-\sum\limits_{i=1}^{m}B_{i}^{*}XB_{i}=X-\sum\limits_{i=1}^{m}A_{i}^{*}X^{-1}XX^{-1}A_{i}=
X-\sum\limits_{i=1}^{m}A_{i}^{*}X^{-1}A_{i}=Q>0,$$
by Lemma 3.4.1 and Proposition 3.3.1 in \cite{s36}, the operator $\textbf{L}$ is invertible.
We also define operators $ {\textbf P_{i}}:\mathcal{C}^{n\times n}\rightarrow\mathcal{H}^{n\times n}$ by

$${\textbf P_{i}}Z_{\;i}=\textbf{L}^{-1}(B_{i}^{*}Z_{\;i}+Z_{\;i}^{*}B_{i}),\;\;Z_{i}\in\mathcal{C}^{n\times n},\;\;i=1,2, \cdots, m.$$
Thus,we can rewrite (\ref{eq18}) as
\begin{equation}\label{eq19}                                                          
    \Delta X=\textbf{L}^{-1}\Delta Q+\sum\limits_{i=1}^{m}{\textbf P_{i}}\Delta A_{i}+
\textbf{L}^{-1}(\sum\limits_{i=1}^{m}\Delta A_{i}^{*}X^{-1}\Delta A_{i})+\textbf{L}^{-1}(h(\Delta X)).
\end{equation}
Define
$$
||\mathbf{L}^{-1}||=\max_{\begin{array}{c}
W\in\mathcal{H}^{n\times n}\\
||W||=1
\end{array}}||\mathbf{L}^{-1}W||,\;\;\;\;
||\mathbf{P}_{i}||=\max_{\begin{array}{c}
Z\in\mathcal{C}^{n\times n}\\
||Z||=1
\end{array}}||\mathbf{P}_{i}Z||.
$$
 Now we denote
 \begin{eqnarray*}
l&=&\|\textbf{L}^{-1}\|^{-1},\;\;\zeta=\|X^{-1}\|,\;\;m_{i}=\|A_{i}\|,\;\;n_{i}=\|\textbf{P}_{i}\|,\;\;\theta_{i}=\|B_{i}\|,\;\;
\theta=\sum\limits_{i=1}^{m}\theta_{i}^{2},\;i=1,2, \cdots, m,\\
\epsilon&=& \frac{1}{l}\|\Delta Q\|+\sum\limits_{i=1}^{m}(n_{i}\|\Delta A_{i}\|+\frac{\zeta}{l}\|\Delta A_{i}\|^{2}),\;\;\;\;\sigma\;\;=\;\;\frac{\zeta}{l}\sum\limits_{i=1}^{m}((m_{i}+\|\Delta A_{i}\|)\zeta+\theta_{i})\|\Delta A_{i}\|.
 \end{eqnarray*}
  Then we can state the third perturbation estimate as follows.

  \begin{theorem}\label{thm9}                                         
  If
  \begin{equation}\label{eq20}                                             
   \sigma<1\;\;\mbox{and}\;\;\epsilon<\frac{l(1-\sigma)^{2}}{\zeta(l+l\sigma+2\theta+2\sqrt{(l\sigma+\theta)(\theta+l)})},
  \end{equation}
  then
  \begin{equation*}
    \|\widetilde{X}-X\|\leq\frac{2l\epsilon}{l(1+\zeta\epsilon-\sigma)+
    \sqrt{l^{2}(1+\zeta\epsilon-\sigma)^{2}-4l\zeta\epsilon(l+\theta)}}\equiv\xi_{3}.
  \end{equation*}

 \end{theorem}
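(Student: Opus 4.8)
The plan is to recast \eqref{eq19} as a fixed-point equation and apply Brouwer's fixed point theorem on a ball of radius $\xi_{3}$, in the same spirit as the proof of Theorem \ref{thm6}. I would introduce the map
$$g(\Delta X)=\mathbf{L}^{-1}\Delta Q+\sum_{i=1}^{m}\mathbf{P}_{i}\Delta A_{i}+\mathbf{L}^{-1}\Big(\sum_{i=1}^{m}\Delta A_{i}^{*}X^{-1}\Delta A_{i}\Big)+\mathbf{L}^{-1}(h(\Delta X)),$$
which sends $\mathcal{H}^{n\times n}$ into itself, is continuous where $\zeta\|\Delta X\|<1$, and whose fixed points are exactly the solutions $\Delta X$ of \eqref{eq18}; by the uniqueness in Theorem \ref{thm5} such a fixed point must equal $\widetilde X-X$. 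I would then work on $\mathcal B=\{\Delta X\in\mathcal{H}^{n\times n}:\|\Delta X\|\le\xi_{3}\}$, a compact convex set, and prove $g(\mathcal B)\subseteq\mathcal B$.

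First I would bound the three terms of $g$ not involving $\Delta X$. Using $\|\mathbf{L}^{-1}\|=1/l$, $\|\mathbf{P}_{i}\|=n_{i}$ and $\|X^{-1}\|=\zeta$, their combined norm is at most $\frac1l\|\Delta Q\|+\sum_{i}(n_{i}\|\Delta A_{i}\|+\frac{\zeta}{l}\|\Delta A_{i}\|^{2})=\epsilon$. Next, for $\Delta X\in\mathcal B$ and $r=\|\Delta X\|$, I would estimate $h(\Delta X)$ summand by summand, using the Neumann bound $\|(I+X^{-1}\Delta X)^{-1}\|\le(1-\zeta r)^{-1}$ together with $\|\widetilde A_{i}\|\le m_{i}+\|\Delta A_{i}\|$ and $\|B_{i}\|=\theta_{i}$. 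The first sum is controlled by $\theta\zeta r^{2}/(1-\zeta r)$, while the remaining two sums combine, via the identity $\sum_{i}((m_{i}+\|\Delta A_{i}\|)\zeta+\theta_{i})\|\Delta A_{i}\|=l\sigma/\zeta$, into $l\sigma r/(1-\zeta r)$. Dividing by $l$ gives
$$\|\mathbf{L}^{-1}(h(\Delta X))\|\le\frac{\theta\zeta r^{2}/l+\sigma r}{1-\zeta r},$$
so that $\|g(\Delta X)\|\le\epsilon+\dfrac{\theta\zeta r^{2}/l+\sigma r}{1-\zeta r}$.

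The self-mapping requirement is then that this upper bound, evaluated at $r=\xi_{3}$, does not exceed $\xi_{3}$. Clearing the positive factor $1-\zeta r$ turns this into the quadratic inequality
$$\zeta(\theta+l)\,r^{2}-l(1+\zeta\epsilon-\sigma)\,r+l\epsilon\le0,$$
whose coefficients $a=\zeta(\theta+l)$, $b=l(1+\zeta\epsilon-\sigma)$, $c=l\epsilon$ yield smaller root $r_{-}=\frac{b-\sqrt{b^{2}-4ac}}{2a}=\frac{2c}{b+\sqrt{b^{2}-4ac}}=\xi_{3}$ after rationalizing the surd. Hence $g(\mathcal B)\subseteq\mathcal B$, Brouwer's fixed point theorem produces $\Delta X\in\mathcal B$ with $g(\Delta X)=\Delta X$, and therefore $\|\widetilde X-X\|\le\xi_{3}$.

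The delicate point is showing that the hypotheses \eqref{eq20} are precisely what legitimize this argument, in two respects. First, $\xi_{3}$ must be real, i.e. $l^{2}(1+\zeta\epsilon-\sigma)^{2}-4l\zeta\epsilon(l+\theta)\ge0$; reading this as a quadratic in $\zeta\epsilon$ and factoring its own discriminant as $16(l\sigma+\theta)(l+\theta)$ shows the admissible range is exactly $\zeta\epsilon<\frac{l(1-\sigma)^{2}}{l+l\sigma+2\theta+2\sqrt{(l\sigma+\theta)(\theta+l)}}$, the second condition in \eqref{eq20} (together with $\sigma<1$, which keeps the numerator positive). Second, every estimate for $h$ presupposes $\zeta\xi_{3}<1$, and I would verify this by examining $\phi(r)=\zeta(\theta+l)r^{2}-l(1+\zeta\epsilon-\sigma)r+l\epsilon$: a direct computation gives $\phi(1/\zeta)=(\theta+l\sigma)/\zeta>0$, while the condition $\zeta\epsilon<1$ (a consequence of \eqref{eq20} and $\sigma<1$) places the vertex $b/2a$ strictly to the left of $1/\zeta$, forcing both roots, and in particular $\xi_{3}=r_{-}$, to lie below $1/\zeta$.
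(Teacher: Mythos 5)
Your proposal is correct and follows essentially the same route as the paper: the same fixed-point map built from (\ref{eq19}), the same ball of radius $\xi_{3}$, the same termwise bound $\epsilon+\bigl(\sigma r+\theta\zeta r^{2}/l\bigr)/(1-\zeta r)$, the same quadratic $\zeta(l+\theta)r^{2}-l(1+\zeta\epsilon-\sigma)r+l\epsilon$, and a fixed-point theorem plus the uniqueness from Theorem \ref{thm5} (the paper cites Schauder, you cite Brouwer; on a compact convex subset of $\mathcal{H}^{n\times n}$ these coincide). Your verification that (\ref{eq20}) yields a nonnegative discriminant and $\zeta\xi_{3}<1$ is in fact more detailed than the paper's, which establishes $\zeta\xi_{3}<1$ by a direct inequality chain and leaves the discriminant claim unproved.
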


   \begin{proof}
   {Let $$f(\Delta X)=\textbf{L}^{-1}\Delta Q+\sum\limits_{i=1}^{m}{\textbf P_{i}}\Delta A_{i}+
\textbf{L}^{-1}(\sum\limits_{i=1}^{m}\Delta A_{i}^{*}X^{-1}\Delta A_{i})+\textbf{L}^{-1}(h(\Delta X)).$$ Obviously, $f:\mathcal{H}^{n\times n}\rightarrow\mathcal{H}^{n\times n}$ is continuous. The condition (\ref{eq20}) ensures that the quadratic equation $\zeta(l+\theta)\xi^{2}-l(1+\zeta\epsilon-\sigma)\xi+l\epsilon=0$ with respect to the variable $\xi$ has two positive real roots. The smaller one is
$$\xi_{3}=\frac{2l\epsilon}{l(1+\zeta\epsilon-\sigma)+
    \sqrt{l^{2}(1+\zeta\epsilon-\sigma)^{2}-4l\zeta\epsilon(l+\theta)}}.$$ Define
    $\Omega=\{\Delta X\in \mathcal{H}^{n\times n}: \|\Delta X\|\leq \xi_{3}\}.$ Then for any $\Delta X\in \Omega,$ by (\ref{eq20}), we have
    \begin{eqnarray*}
   && ||X^{-1}\Delta X||\leq ||X^{-1}||||\Delta X||\leq\zeta\;\xi_{3}\leq
\zeta\cdot\frac{2l\epsilon}{l(1+\zeta\epsilon-\sigma)}\\
&&=
1+\frac{\zeta\epsilon+\sigma-1}{1+\zeta\epsilon-\sigma}\leq 1+\frac{-2(1-\sigma)(l\sigma+\theta)}{(l\sigma+l+2\theta)(1+\zeta\epsilon-\sigma)}<1.
\end{eqnarray*}
It follows that $I-X^{-1}\Delta X$ is nonsingular and
$$\|I-X^{-1}\Delta X\|\leq\frac{1}{1-\|X^{-1}\Delta X\|}\leq\frac{1}{1-\zeta\|\Delta X\|}.$$
Therefore, we have
\begin{eqnarray*}
\|f(\Delta X)\|&\leq& \frac{1}{l}\|\Delta Q\|+\sum\limits_{i=1}^{m}(n_{i}\|\Delta A_{i}\|+\frac{\zeta}{l}\|\Delta A_{i}\|^{2})+\frac{1}{l}\sum\limits_{i=1}^{m}\theta_{i}^{2}\frac{\zeta\|\Delta X\|^{2}}{1-\zeta\|\Delta X\|}\\
&+&\frac{\zeta}{l}\sum\limits_{i=1}^{m}\left[\left(\zeta(m_{i}+\|\Delta A_{i}\|)+\theta_{i}\right)\|\Delta A_{i}\|\right]\cdot\frac{\|\Delta X\|}{1-\zeta\|\Delta X\|}\\
&\leq &\epsilon+\frac{\sigma\|\Delta X\|}{1-\zeta\|\Delta X\|}+\frac{\theta\zeta\|\Delta X\|^{2}}{l(1-\zeta\|\Delta X\|)}\\
&\leq &\epsilon+\frac{\sigma\xi_{3}}{1-\zeta\xi_{3}}+\frac{\theta\zeta\xi_{3}^{2}}{l(1-\zeta\xi_{3})}=\xi_{3},
\end{eqnarray*}
for $\Delta X\in \Omega.$ That is $f(\Omega)\subseteq\Omega.$ According to Schauder fixed point theorem, there exists $\Delta X_{*}\in\Omega$ such that $f(\Delta X_{*})=\Delta X_{*}.$ It follows that $X+\Delta X_{*}$ is a Hermitian solution of Eq.(\ref{eq7}). By Theorem \ref{thm5}, we know that the solution of Eq.(\ref{eq7}) is unique. Then $\Delta X_{*}=\widetilde{X}-X$ and $\|\widetilde{X}-X\|\leq\xi_{3}.$
}\end{proof}

\begin{rem}   \label{rem3}                                                                   
From Theorem \ref{thm9}, we get the first order perturbation bound for the solution as follows:
\begin{eqnarray*}
&&\|\widetilde{X}-X\|\leq\frac{1}{l}\|\Delta Q\|+\sum\limits_{i=1}^{m}n_{i}\|\Delta A_{i}\|+O\left(\|(\Delta A_{1}, \Delta A_{2}, \cdots, \Delta A_{m}, \Delta Q)\|_{F}^{2}\right),\\\mbox{as}&&(\Delta A_{1}, \Delta A_{2}, \cdots, \Delta A_{m}, \Delta Q)\rightarrow 0.
\end{eqnarray*}
Combining this with (\ref{eq19}) gives
$$\Delta X=\textbf{L}^{-1}\Delta Q+{\textbf L}^{-1}\sum\limits_{i=1}^{m}(B_{i}^{*}\Delta A_{i}+\Delta A_{i}^{*}B_{i})+O\left(\|(\Delta A_{1}, \Delta A_{2}, \cdots, \Delta A_{m}, \Delta Q)\|_{F}^{2}\right).$$
as $\;\;(\Delta A_{1}, \Delta A_{2}, \cdots, \Delta A_{m}, \Delta Q)\rightarrow 0,$
\end{rem}
\section{Backward error }
In this section, we derive a  backward error of an approximate
solution for the unique solution to Eq. (\ref{eq1}) beginning with the lemma.

\begin{lemma}\label{lem6}                                                                      

For every positive definite matrix $X\in\mathcal{H}^{n\times n}$, if $X+\Delta X\geq (1/\nu)I>0,$ then
$$\|\sum\limits_{i=1}^{m}A_{i}^{*}((X+\Delta X)^{-1}-X^{-1})A_{i}\|
\leq (\|\Delta X \|+\nu \|\Delta X \|^{2})\sum\limits_{i=1}^{m}\|X^{-1}A_{i}\|^{2}.$$
\end{lemma}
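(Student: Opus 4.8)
The plan is to factor the matrices $A_i$ out of the expression so that the entire estimate reduces to bounding a single matrix $M$ that does not depend on $i$, and then to find an expansion of $M$ whose norm is controlled purely by $\|\Delta X\|$ and $\nu$ (with no spurious appearance of $\|X\|$ or $\|X^{-1}\|$). Concretely, write $Y=X+\Delta X$ and set $B_i=X^{-1}A_i$, so that $A_i=XB_i$, $A_i^{*}=B_i^{*}X$, and $\|B_i\|=\|X^{-1}A_i\|$. Since $X$ is Hermitian, each summand rewrites as
$$A_i^{*}\bigl((X+\Delta X)^{-1}-X^{-1}\bigr)A_i=B_i^{*}\bigl(XY^{-1}X-X\bigr)B_i,$$
so the whole left-hand side becomes $\bigl\|\sum_i B_i^{*}M B_i\bigr\|$ with $M=XY^{-1}X-X$ independent of $i$.

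The heart of the argument is a clean identity for $M$ that I would obtain by substituting $X=Y-\Delta X$ rather than expanding $Y^{-1}$ as a Neumann-type series. Expanding $XY^{-1}X=(Y-\Delta X)Y^{-1}(Y-\Delta X)$ and simplifying $Y Y^{-1}=Y^{-1}Y=I$ collapses the cross terms and yields
$$XY^{-1}X-X=-\Delta X+\Delta X\,Y^{-1}\,\Delta X.$$
This is the key step, and the reason the substitution $X=Y-\Delta X$ (instead of $Y=X+\Delta X$) is essential: it produces a first-order term $-\Delta X$ together with a single second-order term in which the only inverse appearing is $Y^{-1}=(X+\Delta X)^{-1}$, never $X^{-1}$. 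That is exactly what makes the final bound free of $\|X\|$ and $\|X^{-1}\|$, matching the stated right-hand side.

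From here the estimate is routine. By the triangle inequality, $\|M\|\le\|\Delta X\|+\|\Delta X\,Y^{-1}\,\Delta X\|\le\|\Delta X\|+\|Y^{-1}\|\,\|\Delta X\|^{2}$. The hypothesis $X+\Delta X\ge(1/\nu)I>0$ together with Lemma \ref{lem1} (applied with $A=X+\Delta X$ and $B=(1/\nu)I$) gives $0<(X+\Delta X)^{-1}\le\nu I$, hence $\|Y^{-1}\|\le\nu$ and $\|M\|\le\|\Delta X\|+\nu\|\Delta X\|^{2}$. Finally, using the triangle inequality and submultiplicativity of the spectral norm,
$$\Bigl\|\sum_{i=1}^{m}B_i^{*}MB_i\Bigr\|\le\sum_{i=1}^{m}\|B_i\|^{2}\,\|M\|=\Bigl(\|\Delta X\|+\nu\|\Delta X\|^{2}\Bigr)\sum_{i=1}^{m}\|X^{-1}A_i\|^{2},$$
which is the claimed inequality.

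The only delicate point is the algebraic identity for $M$: if one instead bounds $XY^{-1}X-X=-XY^{-1}\Delta X$ by $\|X\|\,\|Y^{-1}\|\,\|\Delta X\|$, the factor $\|X\|\nu$ appears and the desired form is lost. Everything else is a direct application of Lemma \ref{lem1} and standard norm inequalities, so I expect no further obstacles.
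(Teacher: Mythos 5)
Your proof is correct, and it follows the same basic strategy as the paper's — a second-order expansion of the perturbed inverse, then submultiplicativity together with $\|(X+\Delta X)^{-1}\|\le\nu$ from Lemma \ref{lem1} — but the algebra is organized differently, and the difference is worth noting. The paper iterates the resolvent identity to write $(X+\Delta X)^{-1}-X^{-1}=-X^{-1}\Delta X X^{-1}+X^{-1}\Delta X X^{-1}\Delta X(X+\Delta X)^{-1}$ and bounds $A_i^{*}(\cdot)A_i$ term by term; however, the second-order term in that asymmetric form does not factor directly into $\nu\|\Delta X\|^{2}\|X^{-1}A_i\|^{2}$ (naive submultiplicativity leaves factors like $\|X^{-1}\|\,\|A_i\|$ or $\|(X+\Delta X)^{-1}A_i\|$), so the paper's last inequality implicitly relies on the swap identity $X^{-1}\Delta X(X+\Delta X)^{-1}=(X+\Delta X)^{-1}\Delta X X^{-1}$ to symmetrize before factoring. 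Your route — conjugating by $X$ so that the task becomes bounding $M=XY^{-1}X-X$ with $Y=X+\Delta X$, and deriving $M=-\Delta X+\Delta X\,Y^{-1}\Delta X$ by substituting $X=Y-\Delta X$ — arrives at the symmetrized form at once: the error term is flanked by two copies of $\Delta X$ and contains only $Y^{-1}$, so $\bigl\|\sum_i B_i^{*}MB_i\bigr\|\le(\|\Delta X\|+\nu\|\Delta X\|^{2})\sum_i\|X^{-1}A_i\|^{2}$ follows with no hidden step. In short, both proofs are the same computation at heart, but your formulation is the cleaner one: it makes rigorous (indeed, makes unnecessary) the one step the paper leaves implicit.
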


\begin{proof}
According to
$$
(X+\Delta X)^{-1}-X^{-1}= -X^{-1}\Delta X(X+\Delta X)^{-1}
 = -X^{-1}\Delta XX^{-1}+X^{-1}\Delta X X^{-1}\Delta X(X+\Delta X)^{-1},
$$
it follows that
\begin{eqnarray*}
 && \|\sum\limits_{i=1}^{m}A_{i}^{*}((X+\Delta X)^{-1}-X^{-1})A_{i}\|  \\
  &\leq& \sum\limits_{i=1}^{m}(\|A_{i}^{*}X^{-1}\Delta XX^{-1}A_{i}\|+\|A_{i}^{*}X^{-1}\Delta X X^{-1}\Delta X(X+\Delta X)^{-1}A_{i}\|)\\
  &\leq&(\|\Delta X\|+\nu\|\Delta X\|^{2}) \sum\limits_{i=1}^{m}\|X^{-1}A_{i}\|^{2}.
 \end{eqnarray*}
\end{proof}

\begin{theorem}\label{thm7}                                                             
Let $\widetilde{X}>0$ be an approximation to the solution $X$ of
Eq.(\ref{eq1}). If
the residual $R(\widetilde{X}) \equiv
Q+\sum\limits_{i=1}^{m}A_{i}^*\widetilde{X}^{-1}A_{i}-\widetilde{X}$ satisfies

\begin{equation}\label{eq11}                                         
\|R(\widetilde{X})\| < \frac{(1-\Sigma)^{2}}{1+\Sigma+2\sqrt{\Sigma}}\;\lambda_{\min}(\widetilde{X}),
    \;\;\mbox{where}\;\;\Sigma\equiv\sum\limits_{i=1}^{m}\|\widetilde{X}^{-1}A_{i}\|^{2}<1,
\end{equation}
then
\begin{equation}\label{eq24}
    \|\widetilde{X}-X\|\leq \theta\|R(\widetilde{X})\|,
\end{equation}
where$$\theta=\frac{2\lambda_{\min}(\widetilde{X})}{(1-\Sigma)\lambda_{\min}(\widetilde{X})
+\|R(\widetilde{X})\|+
\sqrt{((1-\Sigma)\lambda_{\min}(\widetilde{X})
+\|R(\widetilde{X})\|)^{2}-4\lambda_{\min}(\widetilde{X})\|R(\widetilde{X})\|}}.$$
\end{theorem}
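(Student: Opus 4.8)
The plan is to run a Brouwer fixed-point argument centred at the approximate solution $\widetilde{X}$, in exact parallel with the proofs of Theorem \ref{thm6} and Theorem \ref{thm9}. Write $\lambda=\lambda_{\min}(\widetilde{X})$ and $r=\|R(\widetilde{X})\|$ for brevity and define
$$g(\Delta X)=R(\widetilde{X})+\sum_{i=1}^{m}A_{i}^{*}\left((\widetilde{X}+\Delta X)^{-1}-\widetilde{X}^{-1}\right)A_{i},\qquad \Delta X\in\mathcal{H}^{n\times n}.$$
Substituting the definition of $R(\widetilde{X})$ shows $g(\Delta X)=Q+\sum_{i=1}^{m}A_{i}^{*}(\widetilde{X}+\Delta X)^{-1}A_{i}-\widetilde{X}$, so a fixed point $\Delta X=g(\Delta X)$ is equivalent to $\widetilde{X}+\Delta X$ being a positive definite solution of Eq.(\ref{eq1}); thus a fixed point of $g$ produces an exact solution near $\widetilde{X}$. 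Note $g(\Delta X)$ is Hermitian whenever $\Delta X$ is Hermitian with $\widetilde{X}+\Delta X>0$, so $g$ maps $\mathcal{H}^{n\times n}$ into itself.

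Next I would identify the radius. With $\xi=\theta r$ as in the statement, a rationalisation shows $\xi$ is exactly the smaller root of $p(\xi)=\xi^{2}-\bigl(r+\lambda(1-\Sigma)\bigr)\xi+r\lambda$: that smaller root is $\tfrac12\bigl(r+\lambda(1-\Sigma)-\sqrt{(r+\lambda(1-\Sigma))^{2}-4r\lambda}\bigr)$, and multiplying by its conjugate turns it into $\tfrac{2r\lambda}{(r+\lambda(1-\Sigma))+\sqrt{(r+\lambda(1-\Sigma))^{2}-4r\lambda}}=\theta r$. I would then set $\Omega=\{\Delta X\in\mathcal{H}^{n\times n}:\|\Delta X\|\le\xi\}$, which is nonempty, bounded, convex and closed, with $g$ continuous on it. For $\Delta X\in\Omega$ one has $\widetilde{X}+\Delta X\ge(\lambda-\xi)I$, so with $\nu=1/(\lambda-\xi)$ Lemma \ref{lem6} (applied with $\widetilde{X}$ in the role of $X$, using $\Sigma=\sum_{i=1}^{m}\|\widetilde{X}^{-1}A_{i}\|^{2}$) gives $\|\sum_{i=1}^{m}A_{i}^{*}((\widetilde{X}+\Delta X)^{-1}-\widetilde{X}^{-1})A_{i}\|\le(\xi+\nu\xi^{2})\Sigma=\Sigma\xi\lambda/(\lambda-\xi)$, whence $\|g(\Delta X)\|\le r+\Sigma\xi\lambda/(\lambda-\xi)$. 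Since $p(\xi)=0$ rearranges precisely to $r+\Sigma\xi\lambda/(\lambda-\xi)=\xi$, this yields $g(\Omega)\subseteq\Omega$; Brouwer's theorem then gives a fixed point $\Delta X_{*}\in\Omega$, and uniqueness (Theorem \ref{thm5}) forces $\widetilde{X}+\Delta X_{*}=X$, so $\|\widetilde{X}-X\|=\|\Delta X_{*}\|\le\xi=\theta r$.

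The step demanding genuine care, and where I expect the real work to lie, is verifying that hypothesis (\ref{eq11}) is exactly what legitimises this machinery: it must guarantee both (a) that $p$ has real roots, and (b) that the smaller root satisfies $\xi<\lambda$, so that $\nu>0$ and Lemma \ref{lem6} applies. The clean device is the identity $\tfrac{(1-\Sigma)^{2}}{1+\Sigma+2\sqrt{\Sigma}}=(1-\sqrt{\Sigma})^{2}$, which recasts (\ref{eq11}) as $r<(1-\sqrt{\Sigma})^{2}\lambda$. Reading the discriminant $\bigl(r+\lambda(1-\Sigma)\bigr)^{2}-4r\lambda=r^{2}-2\lambda(1+\Sigma)r+\lambda^{2}(1-\Sigma)^{2}$ as a quadratic in $r$, its roots are $\lambda(1\pm\sqrt{\Sigma})^{2}$, so the discriminant is positive on the relevant branch precisely when $r<(1-\sqrt{\Sigma})^{2}\lambda$, giving (a). For (b) I would compute $p(\lambda)=\lambda^{2}\Sigma>0$ and observe that the vertex abscissa $\tfrac12\bigl(r+\lambda(1-\Sigma)\bigr)$ lies below $\lambda$ because $r<(1-\sqrt{\Sigma})^{2}\lambda<(1+\Sigma)\lambda$; as $p$ opens upward, both roots lie strictly below $\lambda$, in particular $\xi<\lambda$. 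With (a) and (b) established, the fixed-point argument closes and the bound (\ref{eq24}) follows.
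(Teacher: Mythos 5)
Your proposal is correct and follows essentially the same route as the paper's own proof: the same map $g$, the same ball of radius $\theta\|R(\widetilde{X})\|$, the same quadratic $\xi^{2}-\bigl(\|R(\widetilde{X})\|+\lambda_{\min}(\widetilde{X})(1-\Sigma)\bigr)\xi+\lambda_{\min}(\widetilde{X})\|R(\widetilde{X})\|=0$, Lemma \ref{lem6}, Brouwer's theorem, and uniqueness from Theorem \ref{thm5}. If anything, your verification that (\ref{eq11}) yields a positive discriminant and that the smaller root lies strictly below $\lambda_{\min}(\widetilde{X})$ (via $(1-\Sigma)^{2}/(1+\Sigma+2\sqrt{\Sigma})=(1-\sqrt{\Sigma})^{2}$) is carried out more explicitly than in the paper, which asserts these facts with only a partial computation.
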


\begin{proof}

Let $$\Psi = \{\Delta X \in
\mathcal{H}^{n\times n}:\|\Delta X \|\leq \theta
\|R(\widetilde{X})\|\}.$$ Obviously, $\Psi$ is a nonempty
bounded convex closed set. Let $$g(\Delta
X)=\sum\limits_{i=1}^{m}A_{i}^*\left[(\widetilde{X}+\Delta
X)^{-1}-\widetilde{X}^{-1}\right]A_{i}+R(\widetilde{X}).$$ Evidently $g:
\Psi\mapsto\mathcal{H}^{n\times n}$ is continuous.

Note that the condition (\ref{eq11}) ensures that the quadratical equation
$$x^{2}-\left(\lambda_{\min}(\widetilde{X})(1-\Sigma)+\|R(\widetilde{X})\|\right)x+
\lambda_{\min}(\widetilde{X})\|R(\widetilde{X})\|=0$$ has two positive real roots, and the smaller one
is given by
$$
\mu_{*}=\frac{2\lambda_{\min}(\widetilde{X})\|R
({\widetilde{X})}\|}{(1-\Sigma)\lambda_{\min}(\widetilde{X})
+\|R(\widetilde{X})\|+
\sqrt{((1-\Sigma)\lambda_{\min}(\widetilde{X})
+\|R(\widetilde{X})\|)^{2}-4\lambda_{\min}(\widetilde{X})\|R(\widetilde{X})\|}}.$$
Next, we will prove
that $g(\Psi)\subseteq\Psi$.

For every $\Delta X\in\Psi,$ we have
$$\Delta X \geq-\theta\|R(\widetilde{X})\|I.$$ Hence
 $$\widetilde{X}+\Delta X\geq \widetilde{X}-\theta\|R(\widetilde{X})\|I\geq
 (\lambda_{\min}(\widetilde{X})-\theta\|R(\widetilde{X})\|)I.$$ By (\ref{eq24}), one sees
that
$$\theta\|R(\widetilde{X})\|\leq
\frac{2\lambda_{\min}(\widetilde{X})\|R
({\widetilde{X})}\|}{(1-\Sigma)\lambda_{\min}(\widetilde{X})
+\|R(\widetilde{X})\|}=
\lambda_{\min}(\widetilde{X})\left(1+\frac{\|R(\widetilde{X})\|-(1-\Sigma)\lambda_{\min}(\widetilde{X})
}{(1-\Sigma)\lambda_{\min}(\widetilde{X})+\|R(\widetilde{X})\|}\right).$$
According to (\ref{eq11}), we obtain
\begin{eqnarray*}
\|R(\widetilde{X})\|-(1-\Sigma)\lambda_{\min}(\widetilde{X})&\leq & \left(\frac{(1-\Sigma)^{2}}{1+\Sigma+2\sqrt{\Sigma}}-(1-\Sigma)\right)\lambda_{\min}(\widetilde{X})
 \leq \frac{-2(1-\Sigma)\lambda_{\min}(\widetilde{X})}{1+\Sigma}<0,\\
\end{eqnarray*}
which implies that $$\theta\|R(\widetilde{X})\|\leq \lambda_{\min}(\widetilde{X})\;\;\mbox{and}\;\;(\lambda_{\min}(\widetilde{X})-\theta\|R(\widetilde{X})\|)I>0.$$

According to Lemma \ref{lem6}, we obtain
\begin{eqnarray*}
&&\|g(\Delta X)\|\\
&\leq&\left(\|\Delta X \|+ \frac{\|\Delta X \|^{2}}{\lambda_{\min}(\widetilde{X})-\theta\|R(\widetilde{X})\|}\right)\sum\limits_{i=1}^{m}\|X^{-1}A_{i}\|^{2}+\|R(\widetilde{X})\|\\
&\leq &\left(\theta\|R(\widetilde{X})\|+\frac{(\theta\|R(\widetilde{X})\|)^{2}}{\lambda_{\min}(\widetilde{X})-\theta\|R(\widetilde{X})\|}\right)\Sigma
+\|R(\widetilde{X})\|\\
&=&\theta\|R(\widetilde{X})\|.
\end{eqnarray*}
By Brouwer's fixed point theorem, there exists a $\Delta X\in\Psi$
such that $g(\Delta X)=\Delta X.$ Hence $\widetilde{X}+\Delta X$
is a solution of Eq.(\ref{eq1}). Moreover, by Theorem \ref{thm5}, we
know that the solution $X$ of Eq.(\ref{eq1}) is unique. Then
\begin{equation*}
    \|\widetilde{X}-X\|=\|\Delta X\|\leq\theta\|R(\widetilde{X})\|.
\end{equation*}
\end{proof}
\section{Condition number}
In this section, we apply the theory of condition number developed by Rice \cite{s37} to study condition number of the unique solution to Eq. (\ref{eq1}).
\subsection{The complex case }

Suppose that $X$ and $\widetilde{X}$ are the solutions of Eq.(\ref{eq1}) and Eq.(\ref{eq7}), respectively. Let
$\Delta A=\widetilde{A}-A$, $\Delta Q=\widetilde{Q}-Q$ and $\Delta
X=\widetilde{X}-X$. Using Theorem \ref{thm9} and Remark \ref{rem3}, we have

\begin{equation}\label{eq21}                                                                    
\Delta X=\widetilde{X}-X=\textbf{L}^{-1}\Delta Q+{\textbf L}^{-1}\sum\limits_{i=1}^{m}(B_{i}^{*}\Delta A_{i}+\Delta A_{i}^{*}B_{i})+O\left(\|(\Delta A_{1}, \Delta A_{2}, \cdots, \Delta A_{m}, \Delta Q)\|_{F}^{2}\right),
\end{equation}
as $(\Delta A_{1}, \Delta A_{2}, \cdots, \Delta A_{m}, \Delta Q)\rightarrow 0.$

By the theory of condition number developed by Rice \cite{s37}, we define
the condition number of the Hermitian positive definite solution
$X$ to Eq.(\ref{eq1}) by
\begin{equation}  \label{eq22}                                               
c(X)=\lim_{\delta\rightarrow 0}\sup_{||(\frac{\Delta
A_{1}}{\eta_{1}}, \frac{\Delta
A_{2}}{\eta_{2}}, \cdots, \frac{\Delta
A_{m}}{\eta_{m}},
\frac{\Delta Q}{\rho})||_{F}\leq\delta}\frac{||\Delta
X||_{F}}{\xi\delta},
\end{equation}
where $\xi$,  $\rho$ and $\eta_{i},$ $i=1,2, \cdots, m,$ are positive parameters. Taking
$\xi=\eta_{i}=\rho=1$  in (\ref{eq22}) gives the absolute condition number $c_{abs}(X)$,
and taking $\xi=||X||_{F}$, $\eta_{i}=||A_{i}||_{F}$  and $\rho=||Q||_{F}$ in (\ref{eq22})
gives the relative condition number $c_{rel}(X)$.

Substituting (\ref{eq21}) into (\ref{eq22}), we get
\begin{eqnarray*}  \label{eq23}                                             
c(X)&=&\frac{1}{\xi}\!\!\!\max_{\begin{array}{c}(\frac{\Delta
A_{1}}{\eta_{1}}, \frac{\Delta
A_{2}}{\eta_{2}}, \cdots, \frac{\Delta
A_{m}}{\eta_{m}},
\frac{\Delta Q }{\rho})\neq 0\\
 \Delta A_{i}\in\mathcal{C}^{n\times
n}, \Delta Q\in\mathcal{H}^{n\times n}
\end{array}}
\!\!\!\!\!\!\!\!\frac{||\mathbf{L}^{-1}(\Delta Q+\sum\limits_{i=1}^{m}(B_{i}^{*}\Delta
A_{i}+\Delta A_{i}^{*}B_{i})) ||_{F}}{||(\frac{\Delta
A_{1}}{\eta_{1}}, \frac{\Delta
A_{2}}{\eta_{2}}, \cdots, \frac{\Delta
A_{m}}{\eta_{m}}, \frac{\Delta
Q}{\rho})||_{F}}\nonumber\\
&=&\frac{1}{\xi}\!\!\!\max_{\begin{array}{c}(E_{1}, E_{2}, \cdots, E_{m},
H)\neq 0\\
 E_{i}\in\mathcal{C}^{n\times
n}, H\in\mathcal{H}^{n\times n}
\end{array}}
\!\!\!\!\!\!\!\!\frac{||\mathbf{L}^{-1}(\rho H+\sum\limits_{i=1}^{m}\eta_{i}(B_{i}^{*}E_{i}+E_{i}^{*}B_{i}))
||_{F}}{||(E_{1}, E_{2}, \cdots, E_{m}, H)||_{F}}.
\end{eqnarray*}
Let $L$ be the matrix representation of the linear operator
$\mathbf{L}$. Then it is easy to see that
\begin{equation*}
L=I\otimes I+\sum\limits_{i=1}^{m}B_{i}^{T}\otimes B_{i}^{*}=I\otimes I+\sum\limits_{i=1}^{m}(X^{-1}A_{i})^{T}\otimes (X^{-1}A_{i})^{*}.
\end{equation*}
Let\begin{eqnarray}\label{eq34}
&&L^{-1}=S+\textbf{i}\Sigma,\nonumber\\
&&L^{-1}(I\otimes B_{i}^{*})=L^{-1}(I\otimes
(X^{-1}A_{i})^{*})=U_{i1}+\textbf{i}\Omega_{i1},\nonumber\\
&&L^{-1}(B_{i}^{T}\otimes I)\Pi=L^{-1}((X^{-1}A_{i})^{T}\otimes
I)\Pi=U_{i2}+\textbf{i}\Omega_{i2},\nonumber\\
&& S_{c}=\left[\begin{array}{cc}
S & -\Sigma \\
\Sigma & S
\end{array}\right],\;\;\;\;
U_{i}=\left[\begin{array}{cc}
U_{i1}+U_{i2} & \Omega_{i2}-\Omega_{i1}\\
\Omega_{i1}+\Omega_{i2} & U_{i1}-U_{i2}\end{array}\right],\;\;i=1,2, \cdots, m,
\end{eqnarray}
$$\vector H=x+\textbf{i}y,\;\;\vector E_{i}=a_{i}+\textbf{i}b_{i},
\;\;g=(x^{T}, y^{T}, a^{T}_{1}, b^{T}_{1}, \cdots, a^{T}_{m}, b^{T}_{m})^{T},\;\;M=(E_{1}, E_{2}, \cdots, E_{m},H),$$
where $x, y, a_{i}, b_{i}
\in\mathcal{R}^{n^{2}},\; S, \Sigma, U_{i1}, U_{i2}, \Omega_{i1}, \Omega_{i2}\in\mathcal{R}^{n^{2}\times n^{2}},\;i=1, 2, \cdots, m,$ \;$\Pi$ is the vec-permutation matrix, such that
$$\vector \;E_{i}^{T}=\Pi \;\vector \;E_{i}.$$
Furthermore, we obtain that
\begin{eqnarray*}
&&c(X)=\frac{1}{\xi}\max_{\begin{array}{c}M\neq 0\\
\end{array}}
\frac{||\mathbf{L}^{-1}(\rho H+\sum\limits_{i=1}^{m}\eta_{i}(B_{i}^{*}E_{i}+E_{i}^{*}B_{i}))
||_{F}}{||(E_{1}, E_{2}, \cdots, E_{m}, H)||_{F}}\\
&=&\frac{1}{\xi}\max_{\begin{array}{c}M\neq 0\\
\end{array}}
\frac{||\rho{L}^{-1} \vector H+\sum\limits_{i=1}^{m}\eta_{i}{L}^{-1}((I\otimes B_{i}^{*})\vector E_{i}+(B_{i}^{T}\otimes I)\vector E_{i}^{*})
||}{\left\|\left(
\vector E_{1},
\vector E_{2},
\cdots,
\vector E_{m}, \vector H
\right)\right\|}\\
&=&\frac{1}{\xi}\!\!\max_{\begin{array}{c}M\neq 0\\
\end{array}}
\!\!\!\!\!\frac{||\rho(S+\textbf{i}\Sigma)(x+\textbf{i}y)+
\sum\limits_{i=1}^{m}\eta_{i}[(U_{i1}+\textbf{i}\Omega_{i1})(a_{i}+\textbf{i}b_{i})
+(U_{i2}+\textbf{i}\Omega_{i2})(a_{i}-\textbf{i}b_{i})]
||}{\left\|\left(
\vector E_{1},
\vector E_{2},
\cdots,
\vector E_{m}, \vector H
\right)\right\|}\\
&=&\frac{1}{\xi}\!\!\!\max_{\begin{array}{c}g\neq 0\\
\end{array}}
\frac{||(\rho\; S_{c}, \eta_{1}U_{1}, \eta_{2}U_{2}, \cdots, \eta_{m}U_{m})g
||}{\|g\|}\\
&=&\frac{1}{\xi}\;||\;(\rho S_{c},\;\eta_{1}U_{1},\;\eta_{2}U_{2}, \cdots, \eta_{m} U_{m})||,\;\;E_{i}\in\mathcal{C}^{n\times
n}, H\in\mathcal{H}^{n\times n}.
\end{eqnarray*}
Then we have the following theorem.
\begin{theorem}\label{thm10}                                           
The condition number $c(X)$ defined by (\ref{eq22}) has the
explicit expression
\begin{equation}\label{eq31}                                        
c(X)=\frac{1}{\xi}\;||\;(\rho S_{c},\;\eta_{1}U_{1},\;\eta_{2}U_{2}, \cdots, \eta_{m} U_{m})||,
\end{equation}
where the matrices $S_{c}$ and $U_{i}$ are defined as in (\ref{eq34}).
\end{theorem}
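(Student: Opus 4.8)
The plan is to take the first-order perturbation expansion already established in~(\ref{eq21}), substitute it into the definition~(\ref{eq22}) of the condition number, and reduce the resulting matrix-operator maximization to an ordinary spectral-norm computation over a real parameter vector. Since the $O(\|\cdot\|_F^2)$ term in~(\ref{eq21}) vanishes in the limit $\delta\to 0$, the condition number is governed entirely by the linear part $\mathbf{L}^{-1}\bigl(\Delta Q+\sum_{i=1}^m(B_i^*\Delta A_i+\Delta A_i^*B_i)\bigr)$, so $c(X)$ becomes a maximum of a linear map over the normalized perturbation directions.

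First I would pass from the operator formulation to a matrix formulation via the $\vector(\cdot)$ operation and the Kronecker-product identity, writing $L=I\otimes I+\sum_{i=1}^m B_i^T\otimes B_i^*$ as the matrix representation of $\mathbf{L}$. The key step is to handle the conjugate transpose $\Delta A_i^*$ correctly: applying $\vector$ turns $B_i^*\Delta A_i$ into $(I\otimes B_i^*)\vector\,\Delta A_i$ and turns $\Delta A_i^*B_i$ into $(B_i^T\otimes I)\vector\,\Delta A_i^*$, and the vec-permutation matrix $\Pi$ relates $\vector\,E_i^*$ to $\vector\,E_i$. Because the entries are complex, I would separate every object into real and imaginary parts as in~(\ref{eq34}), so that multiplication by $\textbf{i}$ and conjugation become real linear maps; this is precisely what the block matrices $S_c$ and $U_i$ encode. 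The renaming $(\Delta A_i,\Delta Q)\mapsto(\eta_i E_i,\rho H)$ absorbs the normalizing parameters into the coefficient blocks $\eta_i U_i$ and $\rho S_c$.

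The main obstacle I anticipate is the bookkeeping in this real-imaginary splitting, specifically verifying that stacking $g=(x^T,y^T,a_1^T,b_1^T,\dots,a_m^T,b_m^T)^T$ and assembling the block matrix $(\rho S_c,\eta_1 U_1,\dots,\eta_m U_m)$ faithfully reproduces the complex expression $\rho L^{-1}\vector H+\sum_i\eta_i L^{-1}\bigl((I\otimes B_i^*)\vector E_i+(B_i^T\otimes I)\vector E_i^*\bigr)$, including the correct sign patterns coming from $\textbf{i}\cdot\textbf{i}=-1$ and from conjugation flipping the sign of $b_i$. Once that correspondence is confirmed, the numerator norm equals $\|(\rho S_c,\eta_1 U_1,\dots,\eta_m U_m)\,g\|$ and the denominator equals $\|g\|$ (using that the Frobenius norm of a matrix tuple equals the Euclidean norm of the stacked real-imaginary vector), so the maximum over $g\neq 0$ is by definition the spectral norm of the assembled block matrix. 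Dividing by $\xi$ then yields~(\ref{eq31}), completing the proof.
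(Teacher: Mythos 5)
Your proposal is correct and takes essentially the same route as the paper: substitute the first-order expansion (\ref{eq21}) into the definition (\ref{eq22}), absorb $\rho$ and $\eta_{i}$ by renaming the perturbation directions, vectorize via Kronecker products and the vec-permutation matrix $\Pi$, split into real and imaginary parts to assemble $S_{c}$ and the $U_{i}$, and identify the maximum of $\|(\rho S_{c},\;\eta_{1}U_{1},\cdots,\eta_{m}U_{m})\,g\|/\|g\|$ over $g\neq 0$ as a spectral norm. The sign bookkeeping you single out (conjugation flipping the sign of $b_{i}$, producing the term $(U_{i2}+\textbf{i}\Omega_{i2})(a_{i}-\textbf{i}b_{i})$) is precisely how the paper's computation builds the blocks $U_{i}$ in (\ref{eq34}), so there is no gap.
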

\begin{rem}\label{rem4}                                          
From (\ref{eq31}) we have the relative condition number
\begin{equation*} \label{eq32}                                               
c_{rel}(X)=\frac{||\;(||Q||_{F}S_{c},\;||A_{1}||_{F}
U_{1},\;||A_{2}||_{F}
U_{2},\;\cdots, ||A_{m}||_{F}
U_{m})||}{||X||_{F}}.
\end{equation*}
\end{rem}
\subsection{The real case  }
In this subsection we consider the real case, i.e.,
all the coefficient matrices $A_{i}$, $Q$ of Eq.(\ref{eq1}) are real. In
such a case the corresponding solution $X$ is also real.
Completely similar arguments as Theorem \ref{thm10} give the following
theorem.
\begin{theorem}
Let $A_{i}$, $Q$ be real and $c(X)$ be the condition number defined by
(\ref{eq22}). Then $c(X)$ has the explicit expression
\begin{equation*}
c(X)=\frac{1}{\xi}\;||\;(\rho S_{r},\;\eta_{1}U_{1},\;\eta_{2}U_{2}, \cdots, \eta_{m}U_{m})\;||,
\end{equation*}
where
\begin{eqnarray*}
&&S_{r}=\left(I+\sum\limits_{i=1}^{m}(A_{i}^{T}X^{-1})\otimes(A_{i}^{T}X^{-1})\right)^{-1},\\
&&U_{i}=S_{r}[I\otimes(A_{i}^{T}X^{-1})+((A_{i}^{T}X^{-1})\otimes I)\Pi],\;\;i=1, 2, \cdots, m.
\end{eqnarray*}
\end{theorem}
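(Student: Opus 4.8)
The plan is to mirror the derivation that produced Theorem~\ref{thm10}, specializing every step to the real case where the imaginary parts that forced the block decomposition in the complex case simply vanish. First I would take the first-order expansion (\ref{eq21}) and observe that when $A_i$ and $Q$ are real the unique solution $X$ is real symmetric, so $X^{-1}$ is symmetric and $B_i=X^{-1}A_i$ satisfies $B_i^{*}=B_i^{T}=A_i^{T}X^{-1}$. Hence all the quantities appearing in $\Delta X=\mathbf{L}^{-1}\Delta Q+\mathbf{L}^{-1}\sum_{i=1}^{m}(B_i^{*}\Delta A_i+\Delta A_i^{*}B_i)+O(\|\cdot\|_F^{2})$ are real, and I may work entirely over $\mathcal{R}^{n^2\times n^2}$.

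Next I would apply the vec operator, using the standard identity $\vector(PWR)=(R^{T}\otimes P)\,\vector W$ together with the relation $\vector E_i^{T}=\Pi\,\vector E_i$ for the vec-permutation matrix $\Pi$. Applied to the defining relation $\mathbf{L}W=W+\sum_i B_i^{*}WB_i$, this gives the matrix representation
\begin{equation*}
L=I+\sum_{i=1}^{m}B_i^{T}\otimes B_i^{*}=I+\sum_{i=1}^{m}(A_i^{T}X^{-1})\otimes(A_i^{T}X^{-1}),
\end{equation*}
which is invertible by the argument already given before Theorem~\ref{thm9}, so $S_r=L^{-1}$ is well defined. Writing $\vector(B_i^{*}\Delta A_i)=(I\otimes(A_i^{T}X^{-1}))\vector\Delta A_i$ and $\vector(\Delta A_i^{*}B_i)=((A_i^{T}X^{-1})\otimes I)\Pi\,\vector\Delta A_i$, I would read off $U_i=S_r[I\otimes(A_i^{T}X^{-1})+((A_i^{T}X^{-1})\otimes I)\Pi]$, exactly as in the statement; the point is that here $\Omega_{i1}=\Omega_{i2}=0$, so the real/imaginary block matrices $S_c$ and the $2\times2$ block form of $U_i$ collapse to their single real blocks.

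Finally I would substitute the resulting identity $\vector\Delta X=S_r\,\vector\Delta Q+\sum_{i=1}^{m}U_i\,\vector\Delta A_i+O(\|\cdot\|_F^{2})$ into the definition (\ref{eq22}). Setting $\Delta A_i=\eta_i E_i$, $\Delta Q=\rho H$ and letting $\delta\to0$ so that the quadratic remainder drops out, the supremum becomes $\frac{1}{\xi}\max_{g\neq0}\|(\rho S_r,\eta_1U_1,\dots,\eta_mU_m)g\|/\|g\|$ after stacking $g=(\vector H^{T},\vector E_1^{T},\dots,\vector E_m^{T})^{T}$, and this maximum is by definition the spectral norm of the block matrix, yielding (\ref{eq31}) with $S_c$ replaced by $S_r$. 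The only delicate point to check is that restricting $H$ to symmetric (rather than general) matrices does not alter the value of the maximum; this is handled exactly as in the complex case, where the Hermitian constraint on $H$ was absorbed into the same $g$-parametrization, so no new obstacle arises and the proof reduces to the bookkeeping of the vec-identities above.
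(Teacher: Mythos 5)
Your proposal is correct and follows essentially the same route as the paper: the paper's own ``proof'' of this theorem is just the remark that ``completely similar arguments as Theorem \ref{thm10}'' apply, and your write-up is precisely that complex-case derivation specialized to real data, with $B_i^{*}=B_i^{T}=A_i^{T}X^{-1}$ making the imaginary blocks $\Omega_{i1},\Omega_{i2}$ vanish so that $S_c$ collapses to $S_r$. Your closing caveat about maximizing over symmetric $H$ versus arbitrary $H$ is treated with exactly the same (tacit) identification in the paper's complex-case argument, so you are faithful to the paper on that point as well.
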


\begin{rem}\label{rem2}                                              
In the real case the relative condition number is given by
\begin{equation*}
c_{rel}(X)=\frac{||\;(||Q||_{F}S_{r},\;||A_{1}||_{F}U_{1},\;||A_{2}||_{F}U_{2}, \cdots,||A_{m}||_{F}
U_{m})||}{||X||_{F}}.
\end{equation*}
\end{rem}
\section{Numerical Examples }
To illustrate the theoretical results of the previous sections, in this
section four simple examples are given, which were carried out
using MATLAB 7.1. For the stopping criterion we take
$\varepsilon_{k+1}(X)=\|X-\sum\limits_{i=1}^{m}A_{i}^{*}X^{-1}A_{i}-I\|<1.0e-10.$

\begin{example}      \label{ex1}                                                                    
We study the matrix equation $$X-A_{1}^{*}X^{-1}A_{1}-A_{2}^{*}X^{-1}A_{2}=I,$$
with \[A_{k}=\frac{\frac{1}{k+2}+2\times 10^{-2}}{||A||}A,\;\;k=1,2,\;\;\;A=\left(\begin{array}{ccccc}
2 & 1& 0& 0& 0\\
1 & 2& 1& 0& 0\\
0 & 1& 2& 1& 0\\
0 & 0& 1& 2& 1\\
0 & 0& 0& 1& 2
\end{array}\right).
\] By computation, $\beta=1.0009,$ $\alpha=1.1976.$
Let $X_{0}=1.1 I.$
Algorithm (\ref{eq6}) needs 11 iterations to obtain the unique positive definite solution
\[X=\left(\begin{array}{ccccc}
 1.0643  &  0.0494  &  0.0104  & -0.0009 &  -0.0000\\
    0.0494 &   1.0747   & 0.0485 &   0.0104  & -0.0009\\
    0.0104  &  0.0485&    1.0747  &  0.0485   & 0.0104\\
   -0.0009   & 0.0104 &   0.0485  &  1.0747 &   0.0494\\
   -0.0000  & -0.0009   & 0.0104  &  0.0494  &  1.0643
\end{array}\right)\in [\beta I, \alpha I]\] with the residual
$\|X-A_{1}^{*}X^{-1}A_{1}-A_{2}^{*}X^{-1}A_{2}-I\|=4.8477e-011,$ which satisfies Theorem \ref{thm5} and Theorem \ref{thm3}.
\end{example}


\begin{example}                     \label{ex2}                                      
We consider the matrix equation  $$X-A_{1}^{*}X^{-1}A_{1}-A_{2}^{*}X^{-1}A_{2}=I,$$
with \[A_{1}=\frac{\frac{1}{3}+2\times 10^{-2}}{||A||}A, \;\;A_{2}=\frac{\frac{1}{6}+3\times 10^{-2}}{||A||}A,\;\;\;A=\left(\begin{array}{ccccc}
2 & 1& 0& 0& 0\\
1 & 2& 1& 0& 0\\
0 & 1& 2& 1& 0\\
0 & 0& 1& 2& 1\\
0 & 0& 0& 1& 2
\end{array}\right).
\]
Suppose that the
coefficient matrices $A_{1}$ and $A_{2}$ are perturbed to
$\widetilde{A_{i}}=A_{i}+\Delta A_{i},i=1,2$, where $$\Delta A_{1}=\frac{10^{-j}}{\|C^{T}+C\|}(C^{T}+C),\;\;\Delta A_{2}=\frac{3\times10^{-j-1}}{\|C^{T}+C\|}(C^{T}+C)$$ and $C$ is a random matrix generated by
MATLAB function \textbf{randn}.

We now consider the corresponding  perturbation bounds for the solution $X$ in Theorem \ref{thm6}, Theorem \ref{thm8}
and Theorem \ref{thm9}.

The conditions in Theorem \ref{thm6} are
\begin{eqnarray*}
con1&=& 2\beta^{2}-b>0,\;\;con2=\beta^{2}-\sum\limits_{i=1}^{2}\|A_{i}\|^{2}>0,\\
  con3 &=& (\beta^{2}-\sum\limits_{i=1}^{2}\|A_{i}\|^{2})^{2}-4\beta^{2}\sum\limits_{i=1}^{2}\|\Delta A_{i}\|\;(2\|A_{i}\|+\|\Delta A_{i}\|)\geq 0.
\end{eqnarray*}

The condition in Theorem \ref{thm8} is
$$con4=\beta^{2}-\sum\limits_{i=1}^{2}(\|A_{i}\|+\|\Delta A_{i}\|)^{2}>0.$$

The conditions in Theorem \ref{thm9} are
\begin{eqnarray*}
con5&=& 1-\sigma>0,\;\;
  con6 = \frac{l(1-\sigma)^{2}}{\zeta(l+l\sigma+2\theta+2\sqrt{(l\sigma+\theta)(\theta+l)})}-\epsilon>0.
\end{eqnarray*}
By computation, we list them in Table \ref{tab:1}.
\begin{table}[h t b]
\caption{Conditions for Example \ref{ex2} with different values of j}
\label{tab:1}
\begin{tabular}{p{1.5cm}p{2.4cm}p{2.4cm}p{2.4cm}p{2.4cm}}
\hline\noalign{\smallskip}
$j$                & 4             & 5              & 6              & 7\\
\noalign{\smallskip}\hline\noalign{\smallskip}
$con1$ & $1.1650$ & $1.1650$ & $1.1650$ & $1.1650$  \\
$con2$ & $0.8379$ & $0.8379$ & $ 0.8379$ & $0.8379$\\
$con3$ & $0.7018$ & $0.7021$ & $ 0.7021$ & $0.7021$\\
$con4$ & $0.8378$ & $0.8379$ & $ 0.8379$ & $0.8379$\\
$con5$ & $0.9999$ & $1.0000$ & $1.0000$ & $1.0000$\\
$con6$ & $0.4802$ & $0.4804$ & $ 0.4804$ & $0.4804$\\ \hline
\noalign{\smallskip}
\end{tabular}
\end{table}

The results listed in Table \ref{tab:1} show that the conditions of Theorem \ref{thm6}- \ref{thm9} are satisfied.
\vskip 0.1in
By Theorem \ref{thm6}-\ref{thm9}, we can
compute the relative perturbation bounds $\xi_{1}, \xi_{2}, \nu_{*}\equiv\frac{\xi_{3}}{\|X\|},$  respectively. These results averaged as the geometric mean of  20 randomly perturbed runs.
Some results are listed in Table \ref{tab:2}.
\begin{table}[h t b]
\caption{Results for Example \ref{ex2} with different values of j}
\label{tab:2}
\begin{tabular}{p{1.5cm}p{2.4cm}p{2.4cm}p{2.4cm}p{2.4cm}}
\hline\noalign{\smallskip}
$j$                & 4             & 5              & 6              & 7\\
\noalign{\smallskip}\hline\noalign{\smallskip}
$\!\!\frac{\|\widetilde{X}-X\|}{\|X\|}$ & $2.7093\times 10^{-5}$ & $2.5933\times 10^{-6}$ & $2.5409\times10^{-7}$ & $2.5031\times10^{-8}$  \\
 $\xi_{1}$ &  $
9.9282\times 10^{-5}$ & $9.9853\times 10^{-6}$ & $ 9.7137\times
10^{-7}$ & $9.8301\times10^{-8}$\\
 $\xi_{2}$ &  $
8.6930\times 10^{-5}$ & $8.7421\times 10^{-6}$ & $ 8.5042\times
10^{-7}$ & $8.6061\times10^{-8}$\\
   $\nu_{*}$ &  $
6.4687\times 10^{-5}$ & $6.5057\times 10^{-6}$ & $ 6.3287\times
10^{-7}$ & $6.4045\times10^{-8}$\\ \hline
\noalign{\smallskip}
\end{tabular}
\end{table}

The results listed in Table \ref{tab:2} show that the perturbation
bound $\nu_{*}$ given by Theorem \ref{thm9} is fairly sharp,  the bound $\xi_{2}$ given by Theorem \ref{thm8}
is relatively sharp, while the bound $\xi_{1}$ given by Theorem \ref{thm6} which does not depend on the exact solution is conservative.
\end{example}

\begin{example}\label{ex3}
We consider  $$X-A_{1}^{*}X^{-1}A_{1}-A_{2}^{*}X^{-1}A_{2}=Q,$$
with \[A_{1}=\frac{\frac{1}{3}+2\times 10^{-2}}{||A||}A, \;\;A_{2}=\frac{\frac{1}{6}+3\times 10^{-2}}{||A||}A,\;\;\;Q=A=\left(\begin{array}{ccccc}
2 & 1& 0& 0& 0\\
1 & 2& 1& 0& 0\\
0 & 1& 2& 1& 0\\
0 & 0& 1& 2& 1\\
0 & 0& 0& 1& 2
\end{array}\right).
\]                                                          
Choose $\widetilde{X}_0=A$. Let the
approximate solution $\widetilde{X}_k$ of $X$ be given with the
iterative method (\ref{eq6}), where $k$ is the iterative number.

The residual $R(\widetilde{X}_k)\equiv
Q+A_{1}^*\widetilde{X}_k^{-1}A_{1}+A_{2}^*\widetilde{X}_k^{-1}A_{2}-\widetilde{X}_k$ satisfies the conditions in
Theorem \ref{thm7}.  By Theorem \ref{thm7}, we can compute the backward error
bound for $\widetilde{X}_k$
$$\parallel\widetilde{X}_k -X\parallel \leq
\theta\|R(\widetilde{X}_k)\|,$$ where $$\theta=\frac{2\lambda_{\min}(\widetilde{X}_{k})}{(1-\Sigma)\lambda_{\min}(\widetilde{X}_{k})
+\|R(\widetilde{X}_{k})\|+
\sqrt{((1-\Sigma)\lambda_{\min}(\widetilde{X}_{k})
+\|R(\widetilde{X}_{k})\|)^{2}-4\lambda_{\min}(\widetilde{X}_{k})\|R(\widetilde{X}_{k})\|}}.$$
 Some results are
listed in Table\ref{tab:3}.
\begin{table}[h t b]
\caption{Results for Example \ref{ex3} with different values of k}
\label{tab:3}
\begin{tabular}{p{1.5cm}p{2.4cm}p{2.4cm}p{2.4cm}p{2.4cm}}
\hline\noalign{\smallskip}
$k$           & 1     &2            & 3               & 4              \\
\noalign{\smallskip}\hline\noalign{\smallskip}
  $||\widetilde{X}_k-X||$&$5.0268\times 10^{-4}$ & $5.7662\times 10^{-6}$ &$6.6162\times 10^{-8}$ & $7.5024\times 10^{-10}$ \\ \hline
 $\theta|| R(\widetilde{X}_k)||$ & $5.1435\times 10^{-4}$ &$ 5.9000\times 10^{-6}    $&$6.7689\times 10^{-8}$ & $ 7.7656\times 10^{-10}$  \\ \hline
\noalign{\smallskip}
\end{tabular}
\end{table}

The results listed in Table \ref{tab:3} show that the error bound
given by Theorem \ref{thm7} is fairly sharp.
\end{example}

\begin{example}           \label{ex4}                                                   
We study the matrix equation $$X-A_{1}^{*}X^{-1}A_{1}-A_{2}^{*}X^{-1}A_{2}=Q,$$
with \[A_{j}=\frac{\frac{1}{j+2}+2\times 10^{-k}}{||A||}A,\;j=1,2,\;\;A=\left(\begin{array}{ccccc}
2 & 1& 0& 0& 0\\
1 & 2& 1& 0& 0\\
0 & 1& 2& 1& 0\\
0 & 0& 1& 2& 1\\
0 & 0& 0& 1& 2
\end{array}\right),\;\;
Q=\left(\begin{array}{ccccc}
2 & 1& 0& 9& 0\\
1 & 2& 1& 0& 8\\
5 & 1& 2& 1& 6\\
9 & 0& 1& 2& 1\\
0 & 2& 3& 1& 2
\end{array}\right).
\]  By Remark \ref{rem2}, we can compute the relative
condition number $c_{rel}(X).$ Some results are listed in Table
\ref{tab:4}.
\begin{table}[h t b]
\caption{Results for Example \ref{ex4} with different values of $k$}
\label{tab:4}
\begin{tabular}{p{2cm}p{1.7cm}p{1.7cm}p{1.7cm}p{1.7cm}p{1.7cm}}
\hline\noalign{\smallskip}
  $k$ & 1 & 3 & 5 & 7 & 9 \\ \noalign{\smallskip}\hline\noalign{\smallskip}
 $c_{rel}(X)$& 1.2704 &  1.0951 & 1.0939 & 1.0938 & 1.0938  \\ \hline\noalign{\smallskip}
\end{tabular}
\end{table}

The numerical results listed in the second line show that the unique
positive definite solution $X$ is well-conditioned.
\end{example}
\section*{Acknowledgements}
The author wishes to express her gratitude to the referees for their fruitful comments
and suggestions regarding the earlier version of this paper.

\end{document}